\documentclass[USenglish]{article}	

\usepackage[utf8]{inputenc}				
\usepackage[big,online]{dgruyter}	
\usepackage{lmodern} 
\usepackage{microtype}
\usepackage[numbers,square,sort&compress]{natbib}

\usepackage{tikz}



\def\P{{\mathbb P}}
\def\R{{\mathbb R}}

\def\N{{\mathbb N}}

\def\T{{\mathbb T}}

\def\TT{{\mathcal T}}

\newcommand{\dist}[3][]{{\rm d\!l}[\ifthenelse{\equal{#1}{}}{}{#1;}#2,#3]}
\newcommand{\eff}[3][]{{\rm osc}\ifthenelse{\equal{#1}{}}{}{_{#1}}(#2;#3)}

\def\norm#1#2{\|#1\|_{#2}}

\def\set#1#2{\big\{#1\,:\,#2\big\}}

\def\eps{\varepsilon}

 %

\def\normL2#1#2{\|#1\|_{L^2(#2)}}

\theoremstyle{dgthm}
\newtheorem{theorem}{Theorem}

\newtheorem{lemma}[theorem]{Lemma}

\newtheorem{algorithm}[theorem]{Algorithm}

\theoremstyle{dgdef}
\newtheorem{definition}{Definition}

\newtheorem{remark}{Remark}

\makeatletter
\def\BState{\State\hskip-\ALG@thistlm}
\makeatother

\begin{document}

	\articletype{Research Article}
	\received{Month	DD, YYYY}
	\revised{Month	DD, YYYY}
  \accepted{Month	DD, YYYY}
  \journalname{De~Gruyter~Journal}
  \journalyear{YYYY}
  \journalvolume{XX}
  \journalissue{X}
  \startpage{1}
  \aop
  \DOI{10.1515/sample-YYYY-XXXX}

\title{Adaptive Image Compression via Optimal Mesh Refinement}
\runningtitle{Adaptive Image Compression}

\author[1]{Michael Feischl}
\author*[2]{Hubert Hackl}
\runningauthor{M.~Feischl and H.~Hackl}
\affil[1]{\protect\raggedright 
TU Wien, Institute of Analysis and Scientific Computing, Austria, michael.feischl@tuwien.ac.at}
\affil[2]{\protect\raggedright 
TU Wien, Institute of Analysis and Scientific Computing, Austria, hubert.hackl@tuwien.ac.at}
	
	
\abstract{The JPEG algorithm is a defacto standard for image compression. We investigate whether adaptive mesh refinement can be used to optimize the compression ratio and propose a new adaptive image compression algorithm. We prove that it produces a quasi-optimal subdivision grid for a given error norm with high probability. This subdivision can be stored with very little overhead and thus leads to an efficient compression algorithm. 
We demonstrate experimentally, that the new algorithm can achieve better compression ratios than standard JPEG
compression with no visible loss of quality on many images.

The mathematical core of this work shows that Binev's optimal tree approximation algorithm is applicable to image compression with high probability, when we assume small additive Gaussian noise on the pixels of the image. }

\keywords{adaptive mesh refinement, optimality, image compression, JPEG}

\maketitle

\section{Introduction} 
The JPEG algorithm has been one of the standard image compression algorithm for decades. It decomposes any given image into a fixed mesh of $8\times8$ pixel blocks. On each of these blocks the image is transformed via the discrete 2D cosine transform. The resulting sequence of coefficients is then quantized by use of a quantization matrix $Q$ and a rounding step which is designed in such a way that higher frequencies are stored with less precision (or completely omitted). This leads to a significant storage space reduction compared to a bitmap representation of the image. Particularly for photographs, which typically do not exhibit sharp color or brightness changes, this compression method is very effective. The goal of the present work is to generate the background mesh adaptively in an optimal way instead of using a fixed block size of $8\times 8$ in order to improve the compression rate. The mesh refinement algorithms are inspired by adaptive mesh refinement for the approximation of partial differential equations, particularly the adaptive finite element method.

Adaptive mesh refinement for partial differential equations was first mathematically analyzed for the finite element method in the 1980s and 1990s. I.~M.~Babu\v{s}ka initiated a wave of mathematical research by developing the first a~posteriori error estimators~\cite{adaptivefirst} for the finite element method. This opened a new field of numerical analysis and led to tremendous research activity, we only mention the books~\cite{ao00,verf,repinalleinzuhaus}. D\"orfler~\cite{d1996} gave a first rigorous strategy for mesh refinement based on error estimators in 2D more than a decade later and after another five years Morin, Nochetto, and Siebert~\cite{mns} presented the first unconditional convergence proof.

Optimality of the adaptive strategy, i.e., the fact that asymptotically the optimal meshes are generated, was first proven by Babu\v{s}ka and Vogelius in 1D~\cite{oned} and for $d\geq 2$ by Binev, Dahmen, and DeVore~\cite{bdd} in 2004 for adaptive FEM for the Poisson model 
problem. Building on that, Stevenson~\cite{stevenson07} and Cascon, Kreuzer, Nochetto,  and Siebert~\cite{ckns} simplified the algorithms and extended the problem class. We refer to~\cite{axioms,generalqo} for an overview on adaptive mesh refinement. The present work is inspired by adaptive tree approximation, which is an important building block of adaptive algorithms (particularly for time-dependent problems, when mesh coarsening is necessary) developed in~\cite{binev:2004} and extended to the $hp$-setting in~\cite{binev:2018}. Adaptive tree approximation produces meshes on which a given target function can be approximated with near optimal accuracy vs.\ cost ratio. We use this algorithm for the JPEG compression by approximating the target image (which can be seen as a function that is constant on each pixel) on a mesh of rectangles. On each of these rectangles, we perform the standard quantization of the image via the discrete 2D cosine transform as is done for classical JPEG compression.

We propose two algorithms: A first one takes a given image $I$ and produces a vector $S_I$ which encodes the compressed image. A second decoding algorithm produces an approximation $I_{\rm app}$ to the original image by (almost) reversing the compression steps in the vector $S_I$. The algorithms satisfy the following properties:
\begin{enumerate}
	\item $I_{{\rm app}}$ is an approximation of $I$, i.e. there holds
	\begin{equation*}\label{propertyone}
	\|I-I_{{\rm app}}\|<\tau
	\end{equation*}
	for a certain norm and a preset tolerance $\tau$.
	\item The necessary storage space for $S_I$ is (near-)minimal.
	\item The run time for both the encoding and decoding algorithm is (near-)optimal.
\end{enumerate}

Several other adaptive versions of JPEG can be found in the literature: In~\cite{ajpeg1,ajpeg2}, the compression quality on each $8\times 8$ subgrid is chosen adaptively. In~\cite{ajpg3}, an adaptive grid is chosen via a greedy algorithm based on interpolation error and this algorithm is used in~\cite{ajpeg4} to compress images, however, no (near) optimality of the algorithm is proved (or can be expected).

The remainder of this work is structured as follows:
In Section~\ref{sec:adaptive}, we establish the adaptive algorithm for general norms. In Section~\ref{sec:L2}, we apply the algorithm for the $L^2$-norm and derive a near-optimality result. Section~\ref{sec:storage} discusses effective storage of the compressed image and a final Section~\ref{sec:experiments} tests the algorithm on realistic images.

\subsection{The JPEG compression algorithm}\label{sec:basics}
The JPEG compression algorithm was first proposed in the 90s (standardized in~\cite{jpeg}) and is a defacto standard for image compression since.

The compression algorithm transforms the image into the YCbCr color space, which separates the colors into blue-difference chroma, red-difference chroma, and a gamma corrected luminance channel. Since human eyes are more susceptible to changes in luminance than in color, each of the channels is compressed with different quality parameters.
However, the basic algorithm is the same on all of the channels: The image is split into $8\times 8$-blocks of pixels on which the discrete cosine transform is applied resulting in an $8\times 8$ matrix containing the coefficients. Each entry of this matrix is divided by the corresponding entry of the so-called quantization matrix $Q$ of the same size. This matrix is predefined in the JPEG standard and controls the compression ratio of the algorithm. The result of the division is then rounded to the nearest integer, which usually produces many zeros in the higher frequency coefficients. The resulting sequence of integers is further compressed via a lossless runlength encoding algorithm.
Note that the rounding step is the only irreversible step of the compression algorithm and hence the reason for quality loss in JPEG (see, e.g.,~\cite{graham:2018} for further details and references).

\section{The adaptive compression algorithm}\label{sec:adaptive}
In the following, we will develop a mathematical formulation of adaptive JPEG compression.
\subsection{Representation of images}
We suppose the image with $h$ (height) times $w$ (width) pixels is given by a function:
\begin{equation*}
I:\{1,...,h\}\times\{1,...,w\}\to \mathbb{R}^3,
\end{equation*}
where $I(i,j)$ corresponds to the RGB-values of the pixel $(i,j)$, which might be normalized to $[0,1]$ (although the precise encoding does not matter for what follows). The coordinates of the pixels are chosen such that the pixel $(1,1)$ is the top left pixel and $(h,w)$ is the bottom right pixel of the image, hence we can think of the image as a matrix of size $h\times w$ with values in $\mathbb{R}^3$.

The principal idea for obtaining an approximate image $I_{{\rm app}}$ is to divide the image into smaller elements and approximate the image on those via the classical JPEG strategy.

To that end, we define elements $R$ as rectangular subsets of $\{1,\ldots,h\}\times\{1,\ldots,w\}$ with the same aspect ratio as the original image, i.e.
\begin{align*}
    R :=\{ j_R,\ldots, j_R+h_R-1\}\times \{k_R,\ldots,k_R+w_R-1\}\quad\text{with } h_R/w_R=h/w.
\end{align*}
The aspect ratio condition is not strictly necessary, but makes it easier to efficiently store the mesh, as only one dimension and the position of the element have to be stored. Without loss of generality, we may assume that $w$ and $h$ are powers of two in order to ensure that admissible elements exist. A collection of elements $R$ which forms a disjoint union of $\{1,\ldots,h\}\times\{1,\ldots,w\}$ is called a mesh $\TT$.
Corresponding to a mesh $\TT$, we consider the approximate image $I_{{\rm app},\TT}=I_{\rm app}$ computed on that mesh, where we hide the dependency on $\TT$ when clear from the context. By $\TT_0$, we denote the initial mesh which consists of only one element $\TT_0=\{R_0\}$ covering the entire image, i.e., $R_0=\{1,\ldots,h\}\times \{1,\ldots,w\}$.

Finally, by $I|_R$, we denote the restriction of the image onto a given element $R$.
In order to produce an optimally adapted mesh, we consider a simple refinement rule via bisection illustrated in Figure~\ref{fig:refinementrule}. Given a mesh $\TT$, refinement of an element $R\in\TT$ produces a new mesh $\TT'$ such that
\begin{align*}
    \TT'\setminus\TT = \{R_1,\ldots,R_4\},
\end{align*}
i.e., we do not consider mesh closure (since the approximation problems below will be entirely local).
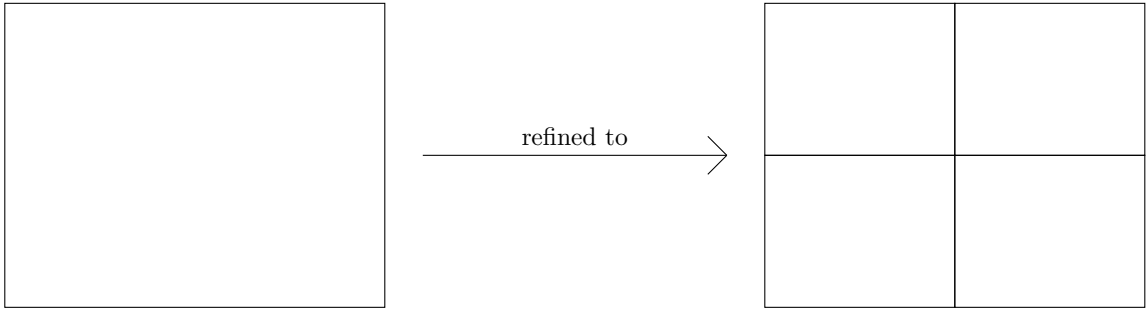
\begin{figure}
	\centering
	\begin{tikzpicture}
	\draw (0,0) rectangle (5,4);
	\draw (5.5,2)--(9.5,2);
	\draw (9.25,1.75)--(9.5,2);
	\draw (9.25,2.25)--(9.5,2);
	\draw (7.5,2.25) node{refined to};
	\draw (10,0) rectangle (12.5,2);
	\draw (10,2) rectangle (12.5,4);
	\draw (12.5,0) rectangle (15,2);
	\draw (12.5,2) rectangle (15,4);	
	\end{tikzpicture}
	\caption{Elements are refined into four children by two bisections of their sides.}\label{fig:refinementrule}
\end{figure}
Note that due to the assumptions on the aspect ratio of the elements, all newly created elements are geometrically similar to $R_0$. 

A mesh $\TT$ is called \textit{coarser} than a mesh $\TT'$ if the mesh $\TT'$ can be obtained by applying refinement steps on $\TT$. In this case we also say $\TT'$ is \textit{finer} than $\TT$ and write $\TT<\TT'$ or $\TT' > \TT$.
	With this notion we further define the sets
	\begin{equation*}
	\T:= \set{\TT}{\TT_0<\TT}
	\end{equation*}
	and
	\begin{equation*}
	\mathbb{T}_n:=\set{\TT\in \T}{\#\TT-\#\TT_0\leq n}.
	\end{equation*}

Note that while two meshes might not be comparable, the refinement rule of our choice still gives \textit{local nestedness}. The following result is well-known and repeated for the convenience of the reader.
\begin{lemma}\label{nestedness}
	Let $\TT',\TT\in \T$. Any element $R\in \TT$ satisfies either $R\subseteq R'$ for some $R'\in\TT'$ or
	\begin{align}\label{eq:nested}
	R= \bigcup_{R'\in\TT'\atop R'\subseteq R}R'.
        \end{align}
	
\end{lemma}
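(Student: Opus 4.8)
The plan is to reduce everything to the tree structure underlying the refinement rule. The key observation is that, since every mesh in $\T$ is obtained from the single initial element $R_0$ by repeated application of the bisection rule, every element that can ever appear is a cell of one common infinite quadtree rooted at $R_0$. Concretely, I would assign to each element $R$ a level $\ell(R):=\log_2(h/h_R)\in\N_0$, which is well defined because each refinement halves both side lengths while preserving the aspect ratio. I claim that the elements of level $\ell$ are precisely the $4^\ell$ congruent rectangles obtained by refining $R_0$ uniformly $\ell$ times, and that these form a disjoint partition of $R_0$. This follows by induction on $\ell$: the refinement rule is deterministic and purely geometric, so refining an element produces the same four quadrant-children irrespective of the mesh in which it sits, and the four children partition their parent; hence the level-$\ell$ cells refine the level-$(\ell-1)$ cells and still tile $R_0$ disjointly.

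From this I obtain the basic trichotomy: for any two elements $R_1,R_2$ appearing in meshes of $\T$, one of $R_1\cap R_2=\emptyset$, $R_1\subseteq R_2$, or $R_2\subseteq R_1$ holds. To see this, assume without loss of generality $\ell(R_1)\le\ell(R_2)$. Since the level-$\ell(R_1)$ cells partition $R_0$, the element $R_2$ is contained in a unique such cell (its level-$\ell(R_1)$ ancestor), while $R_1$ is itself one of these cells. If the ancestor equals $R_1$ then $R_2\subseteq R_1$; otherwise $R_1$ and the ancestor are distinct level-$\ell(R_1)$ cells, hence disjoint, so $R_2\cap R_1=\emptyset$.

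Now fix $R\in\TT$ and apply the trichotomy to $R$ and the elements of $\TT'$. If some $R'\in\TT'$ satisfies $R\subseteq R'$, the first alternative holds and we are done. Otherwise no $R'$ can contain $R$, so by the trichotomy every $R'\in\TT'$ is either disjoint from $R$ or contained in $R$. Since $\TT'$ is a disjoint cover of $R_0\supseteq R$, intersecting the cover with $R$ yields
\begin{equation*}
R=R\cap\!\!\bigcup_{R'\in\TT'}\!\!R'=\bigcup_{R'\in\TT'}(R\cap R')=\bigcup_{R'\in\TT',\,R'\subseteq R}R',
\end{equation*}
which is exactly \eqref{eq:nested}.

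The only slightly delicate part is the inductive verification that every element is a uniform quadtree cell and that these cells tile $R_0$ disjointly at each level; once this nested-partition structure is in place, both the trichotomy and the final dichotomy are immediate. I expect no genuine obstacle beyond bookkeeping, the essential point being that all meshes in $\T$ share the common root $R_0$, so that their elements live in a single quadtree and any two are therefore nested or disjoint.
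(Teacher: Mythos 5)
Your proof is correct and follows essentially the same route as the paper: both arguments reduce the claim to the fact that any two elements descended from the common root $R_0$ are either nested or disjoint, and then conclude by intersecting $R$ with the partition $\TT'$. The only difference is one of detail --- you prove this quadtree trichotomy explicitly via the level/uniform-refinement induction, whereas the paper's (two-sentence) proof simply asserts it from the common ancestry of $R$ and $R'$.
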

\begin{proof}
If~\eqref{eq:nested} is not true, there must exist some element $R'\in\TT'$ with $R\cap R'\neq \emptyset$ and $R'\not\subseteq R$. Since $R$ and $R'$ are both generated starting from $R_0$, we conclude $R\subseteq R'$.
\end{proof}

\subsection{The adaptive Algorithm}
We follow the strategy of JPEG explained in Section~\ref{sec:basics} and convert the image to the YCbCr color basis, thus resulting in three image components $I^{Y}$, $I^{C_b}$, and $I^{C_r}$.  We downsample the $C_b$ and $C_r$ components to half the image size by computing means of $2\times 2$ blocks of pixels. 
The adaptive algorithm below (Algorithm~\ref{alg:adaptive}) is then applied to each of the components.

The general strategy of the adaptive algorithm will be to start with $\TT_0$ and to iteratively refine the mesh until the compression error in the given norm is sufficiently small. To that end, we require local errors which steer the refinement.

\begin{definition} Let $\|.\|$ be a norm of our choice on the space of images of size $h\times w$, $I$ an image of size $h\times w$ and $\TT$ a mesh. We define:
	
	\begin{itemize}
		\item[(i)] The local error
		\begin{equation}\label{eq:locerr}
		\eta(R):= \|(I-I_{{\rm app},\TT})|_R\|~~~~~\forall R\in \TT
		\end{equation}
		\item[(ii)] The global error
		\begin{equation}\label{eq:globerr}
		E(\TT):= \|(I-I_{{\rm app},\TT})\|
		\end{equation}
		\item[(iii)] The best approximation error for $n$ elements
		\begin{equation*}
		E_n:= \underset{\TT\in \mathbb{T}_n}{\textup{min}}E(\TT).
		\end{equation*}
	\end{itemize}
\end{definition}
As the adaptive algorithm below is essentially a greedy algorithm, these straightforward definitions will not lead to optimal results for most error norms $\norm{\cdot}{}$. It is necessary to consider a modified error first proposed and analyzed in~\cite{binev:2004}.

We denote the modified local error by $\widetilde{\eta}$ and set $\widetilde{\eta}(R_0)=\eta (R_0)$ for the initial element $R_0$. Let $\TT\in\T$ and we assume that $\widetilde{\eta}(R)$ is already defined. We define $\widetilde\eta(R_i)$ for the children $R_i,~i=1,..,4$ of $R$ by
\begin{equation*}
\widetilde{\eta}(R_i)^2:= \frac{\sum_{i=1}^4\eta (R_i)^2}{\eta (R)^2+\widetilde{\eta}(R)^2}\widetilde{\eta}(R)^2.
\end{equation*}
This means $\widetilde{\eta}$ is constant among siblings. The general idea behind this definition is that through the relation to the parent element introduced here, the modified error implicitly depends upon the level of the element, i.e., how many refinements were necessary to produce the element. This forces a non-local behavior of the algorithm which would be impossible with a standard greedy algorithm and turns out to be key to show optimality of the procedure~\cite{binev:2004}.

We are now in the position to formulate the adaptive algorithm.
\begin{algorithm}\label{alg:adaptive}
{\bf Input:} Image component $I$ of size $h\times w$ such that $h,w$ are powers of two and initial mesh $\TT_0$, tolerance $\tau>0$, counter $\ell=0$\\
While $E(\TT_\ell)>\tau$ do:
\begin{enumerate}
    \item Compute modified errors $\widetilde \eta(R)$ for all $R\in\TT_\ell$.
    \item Mark $R\in\TT_\ell$ if $\widetilde \eta(R) = \max_{R'\in\TT_\ell} \widetilde \eta(R')$ and $\min\{h_R,w_R\}\geq 16$.
    \item Refine marked elements to generate $\TT_{\ell+1}$ and set $\ell=\ell+1$.
\end{enumerate}
{\bf Output:} A sequence of meshes $\TT_\ell$.
\end{algorithm}

\begin{remark}
    Note that the algorithm does not produce elements $R$ which are smaller than $8\times 8$, i.e., it does not produce a grid finer than the standard JPEG grid.
\end{remark}
The main goal of the following section will be to prove  near-optimality of Algorithm~\ref{alg:adaptive} by employing the framework developed in~\cite{binev:2004}.
\begin{definition}\label{def:nearopt}
	Let an algorithm compute an approximation of a given image $I$ on some (adaptively refined) mesh $\TT_I\in\T$. We call the algorithm near-optimal (w.r.t. the number of refinements) if there exist positive constants $C_1$, $C_2$ such that for $n,k\in\mathbb{N}$ with $k\leq C_1n$
	\begin{equation*}
	\TT_I\in\T_k\quad\implies \quad E(\TT_I)\leq C_2E_n.
	\end{equation*}
	The constants are universal, i.e., they do not depend on the image $I$.
\end{definition}

\section{Adaptive JPEG compression in the $L^2$-norm}\label{sec:L2}
In this section, we employ Algorithm~\ref{alg:adaptive} with $\norm{\cdot}{}$ denoting the $L^2$-norm weighted with $(wh)^{-1/2}$, i.e., the inverse square root of the total number of pixels.
It is a natural question whether the $L^2$-norm appropriately reflects  human tolerance for quality loss in images (the answer is likely negative, see, e.g.~\cite{quality}) and we refer to Sections~\ref{sec:bvnorm} and Section~\ref{sec:experiments} for further discussion and experiments.
\subsection{Image compression on mesh elements}
The approximate image on a given mesh $\TT$ is produced similarly to the original JPEG algorithm, i.e., we use the discrete cosine transform (DCT) and a quantization Matrix $Q$ to quantize the image. We use a quantization matrix defined in the JPEG standard~\cite{jpeg}:
\begin{equation*}
Q:= \left(\begin{array}{rrrrrrrr}
16 & 11 & 10 & 16 & 24 & 40 & 51 & 61 \\
12 & 12 & 14 & 19 & 26 & 58 & 60 & 55 \\
14 & 13 & 16 & 24 & 40 & 57 & 69 & 56 \\
14 & 17 & 22 & 29 & 51 & 87 & 80 & 62 \\
18 & 22 & 37 & 56 & 68 & 109& 103& 77 \\
24 & 35 & 55 & 64 & 81 & 104& 113& 92 \\
49 & 64 & 78 & 87 & 103& 121& 120& 101\\
72 & 92 & 95 & 98 & 112& 100& 103& 99 \\
\end{array}\right).
\end{equation*}
In the following, we will require some operators to formalize the adaptive compression:
Let $R$ be an element of size $h\times w$. We define a restriction to the top-left $8\times 8$ block of $R$
\begin{subequations}\label{eq:ops}
	\begin{align}
 \begin{split}
	\textup{TL}_R\colon &\mathbb{R}^{h\times w}\to \mathbb{R}^{8\times 8}\\
	&(a_{ij})_{1\leq i \leq h, 1\leq j\leq w}\mapsto (a_{ij})_{1\leq i,j\leq 8}
 \end{split}
	\end{align}
 as well as its right-inverse, an embedding from $\R^{8\times 8}$ into $\R^{h\times w}$ by adding zeros,
	\begin{align}
  \begin{split}
	{\rm TL}_R^{-1}\colon &\mathbb{R}^{8\times 8}\to \mathbb{R}^{h\times w},\quad {\rm TL}_R{\rm TL}_R^{-1} = {\rm Id}\colon \R^{8\times 8}\to\R^{8\times 8}.
 \end{split}
	\end{align}
	
	Furthermore, let $\textup{DCT}_R$ denote the 2D discrete cosine transform on $\mathbb{R}^{h\times w}$ and $\textup{IDCT}_R$ its inverse, i.e.
	\begin{align}
  \begin{split}
	\textup{DCT}_R:\mathbb{R}^{h\times w}&\to \mathbb{R}^{h\times w}\\
	(a_{ij})_{1\leq i \leq h, 1\leq j\leq w}\mapsto (\frac{2}{\sqrt{hw}}C(i)C(j)\sum_{k=0}^{h-1}\sum_{l=0}^{w-1}a_{kl}&\cos \frac{(2k+1)i\pi}{2h}\cos \frac{(2l+1)j\pi}{2w})_{1\leq i \leq h, 1\leq j\leq w}
 \end{split}
	\end{align}
	\begin{align}
  \begin{split}
	\textup{IDCT}_R:\mathbb{R}^{h\times w}&\to \mathbb{R}^{h\times w}\\
	(a_{ij})_{1\leq i \leq h, 1\leq j\leq w}\mapsto (\frac{2}{\sqrt{hw}}\sum_{k=0}^{h-1}\sum_{l=0}^{w-1}C(k)C(l)a_{kl}&\cos \frac{(2k+1)i\pi}{2h}\cos \frac{(2l+1)j\pi}{2w})_{1\leq i \leq h, 1\leq j\leq w},
 \end{split}
	\end{align}
 \end{subequations}
	where $C(n)=1/\sqrt{2}$ if $n=0$ and $C(n)=1$ otherwise.

	Note that these operators solely depend on the size of $R$ and are oblivious to the position of $R$ within the image. Since no elements $R$ with size less then $8\times 8$ will occur in Algorithm~\ref{alg:adaptive}, ${\rm TL}_R$ will always be well-defined.
In the following, let the symbols $\odot$ and $./$ denote entry wise multiplication and division for matrices of the same size.
For a given image $I$ and a mesh $\TT$ we compute an approximate image $I^{\rm final}_{{\rm app},\TT}$ via
	\begin{equation}\label{appi1}
	I_{{\rm app},\TT}^{\rm final}|_R=\textup{IDCT}_R\Big({\rm TL}_R^{-1}\big(Q\odot \textup{round}(\textup{TL}_R(\textup{DCT}_R(I|_R))./Q)\big)\Big)\quad\text{for all } R\in \TT.
	\end{equation}
	However, for the adaptive procedure we will ignore the quantization and therefore only compute the approximate image for the final mesh this way. For a mesh $\TT'< \TT$, which occurs during the adaptive procedure, we compute an approximate image $I_{{\rm app},\TT'}$ by just restricting the image to the first $8\times 8$ DCT coefficients on each element $R\in\TT'$, i.e.,
	\begin{equation} \label{appi2}
	I_{{\rm app},\TT'}|_R=\textup{IDCT}_R\Big({\rm TL}_R^{-1}(\textup{TL}_R(\textup{DCT}(I|_R)))\Big)\quad\text{for all } R\in \TT'.
	\end{equation}
	In the next sections we will only use~\eqref{appi2} and return to~\eqref{appi1} when the goal is to efficiently store and reconstruct the approximate image.
The local and global errors defined in~\eqref{eq:locerr}--\eqref{eq:globerr} thus read
		\begin{equation*}
		\eta(R)^2:= \frac{1}{hw}\sum_{(i,j)\in R} (I_{{\rm app},\TT}(i,j)-I(i,j))^2
		\end{equation*}
	and
		\begin{equation*}
		E(\TT)^2:= \sum_{R'\in \TT}\eta(R')^2=\frac{1}{hw}\sum_{(i,j)\in I}(I_{{\rm app},\TT}(i,j)-I(i,j))^2.
		\end{equation*}

\subsection{The refinement property}
The key for optimality of Algorithm~\ref{alg:adaptive} is the so-called \textit{refinement property} (in literature also called \textit{subadditivity} of the estimator): There exists a universal constant $C_0>0$ such that
\begin{equation}\label{refprop}
\sum_{i=1}^4 \eta (R_i)^2\leq C_0\eta(R)^2~~~~~~\textup{with }R_1,..,R_4\textup{ children of }R\quad\text{for all }R\in\TT\in\T.
\end{equation}
Essentially, the refinement property~\eqref{refprop} states that refinement of the mesh does not significantly increase the approximation error.
Unfortunately, it turns out that this is not true in general in our application as is demonstrated in Figure~\ref{refpropex}.
\begin{figure}
\centering
    \includegraphics[width=0.49\textwidth]{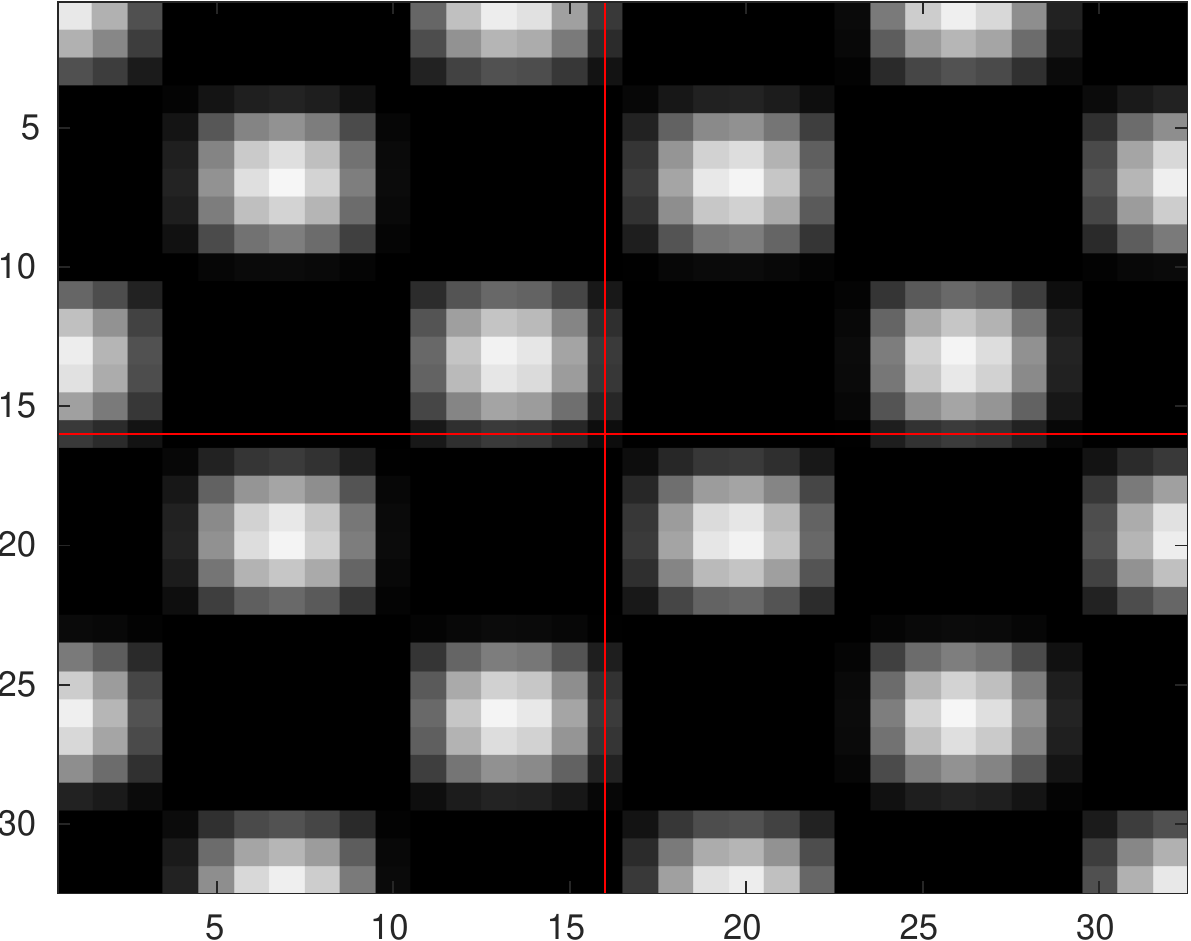}\\
   \includegraphics[width=0.49\textwidth]{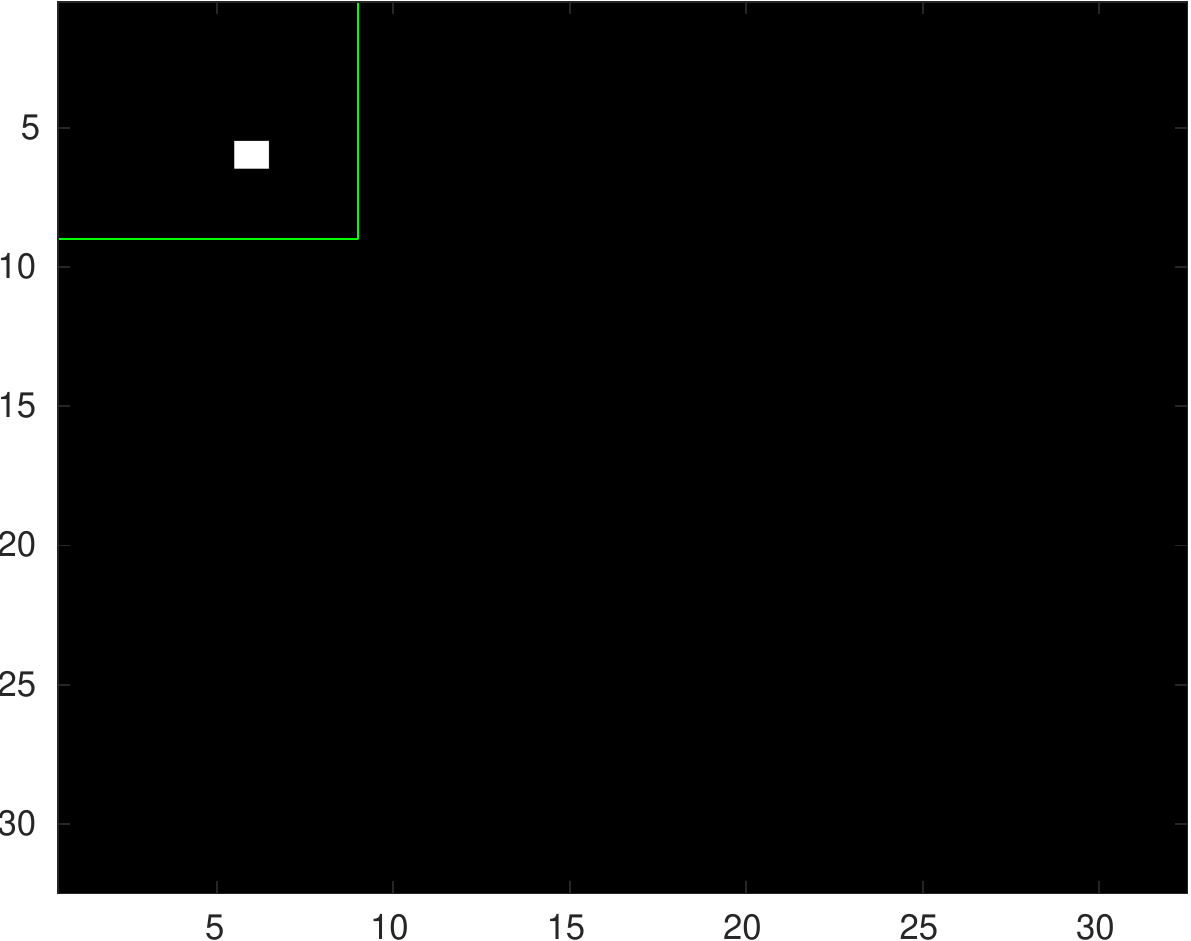}
   \includegraphics[width=0.49\textwidth]{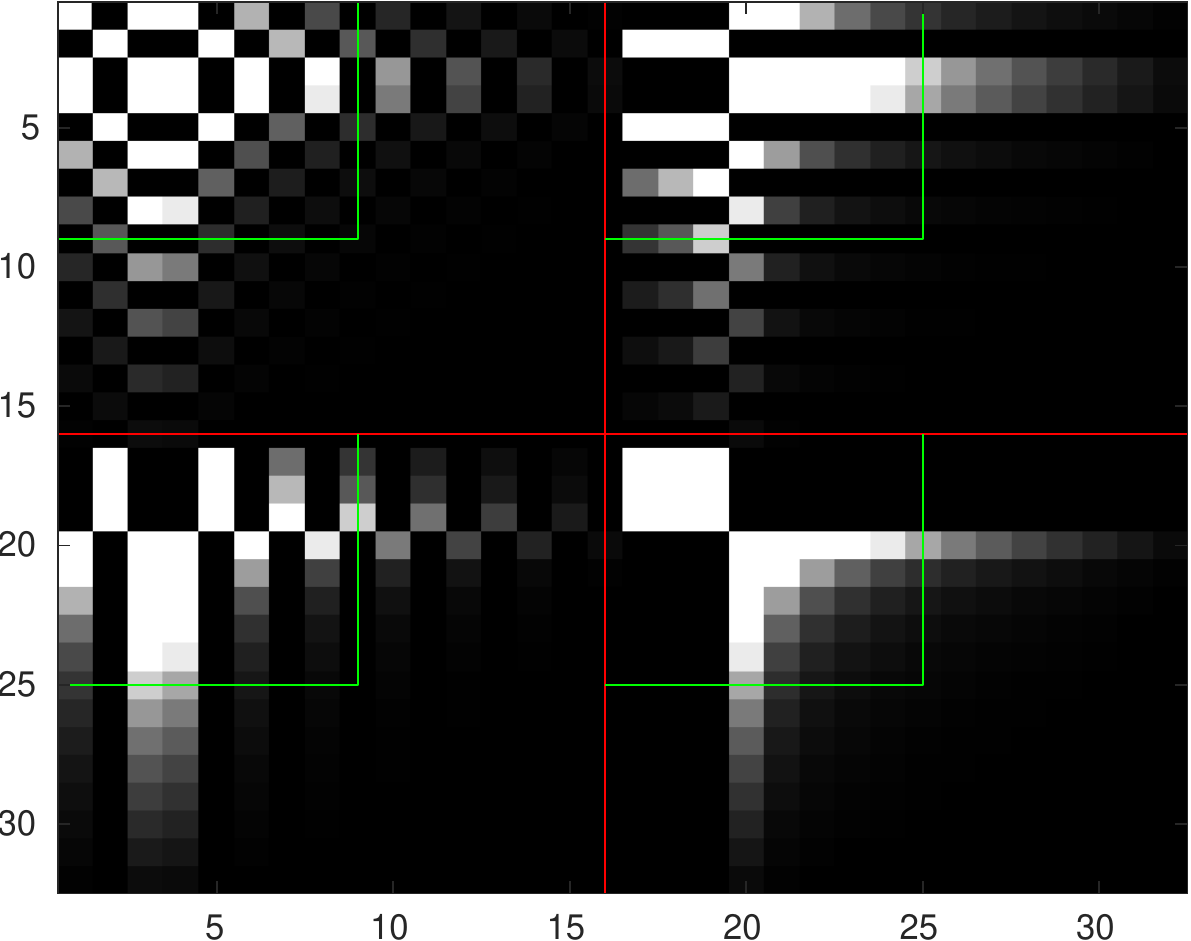}
    \caption{Bottom-left: Coefficient matrix ${\rm DCT}_R(I|_R)$ of element $R$ with size $32\times 32$ and only one non-zero coefficient $(5,5)$ with amplitude $15$. Top: Corresponding image $I|_R$ with children $R_1,\ldots, R_4$ marked in red. Bottom-right: Coefficient matrices ${\rm DCT}_{R_i}^{-1}(I|_{R_i})$. The green lines mark the block of $8\times 8$ coefficients which do not contribute to $\eta(R)$ or $\eta(R_i)$ respectively. Thus, for this example, the right-hand side of~\eqref{refprop} is zero (no non-zero coefficient outside the green block in the left image), while the left-hand side is not. }
    \label{refpropex}
\end{figure}
\begin{figure}
    \centering
    \includegraphics[width=0.49\textwidth]{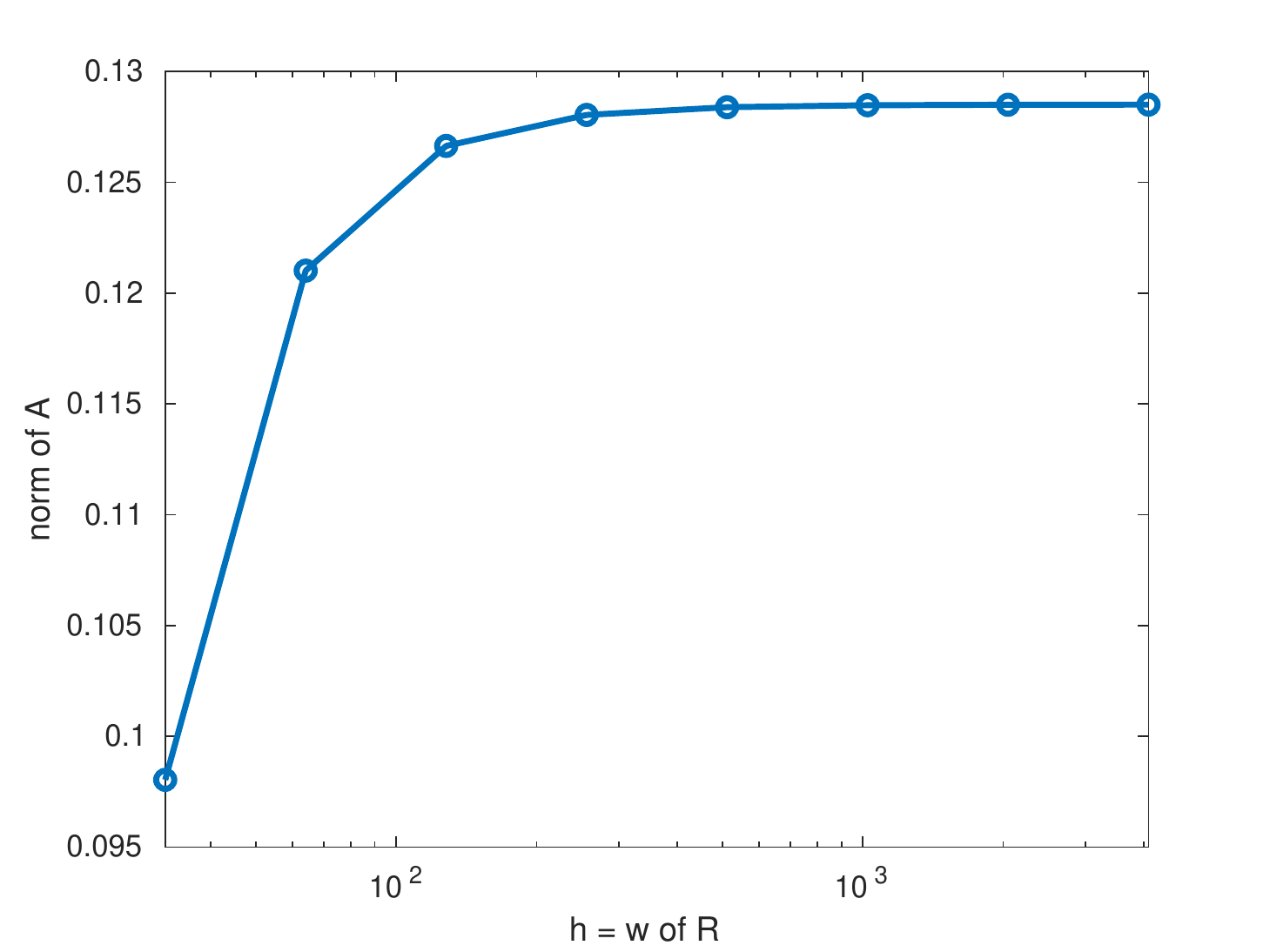}
     \includegraphics[width=0.49\textwidth]{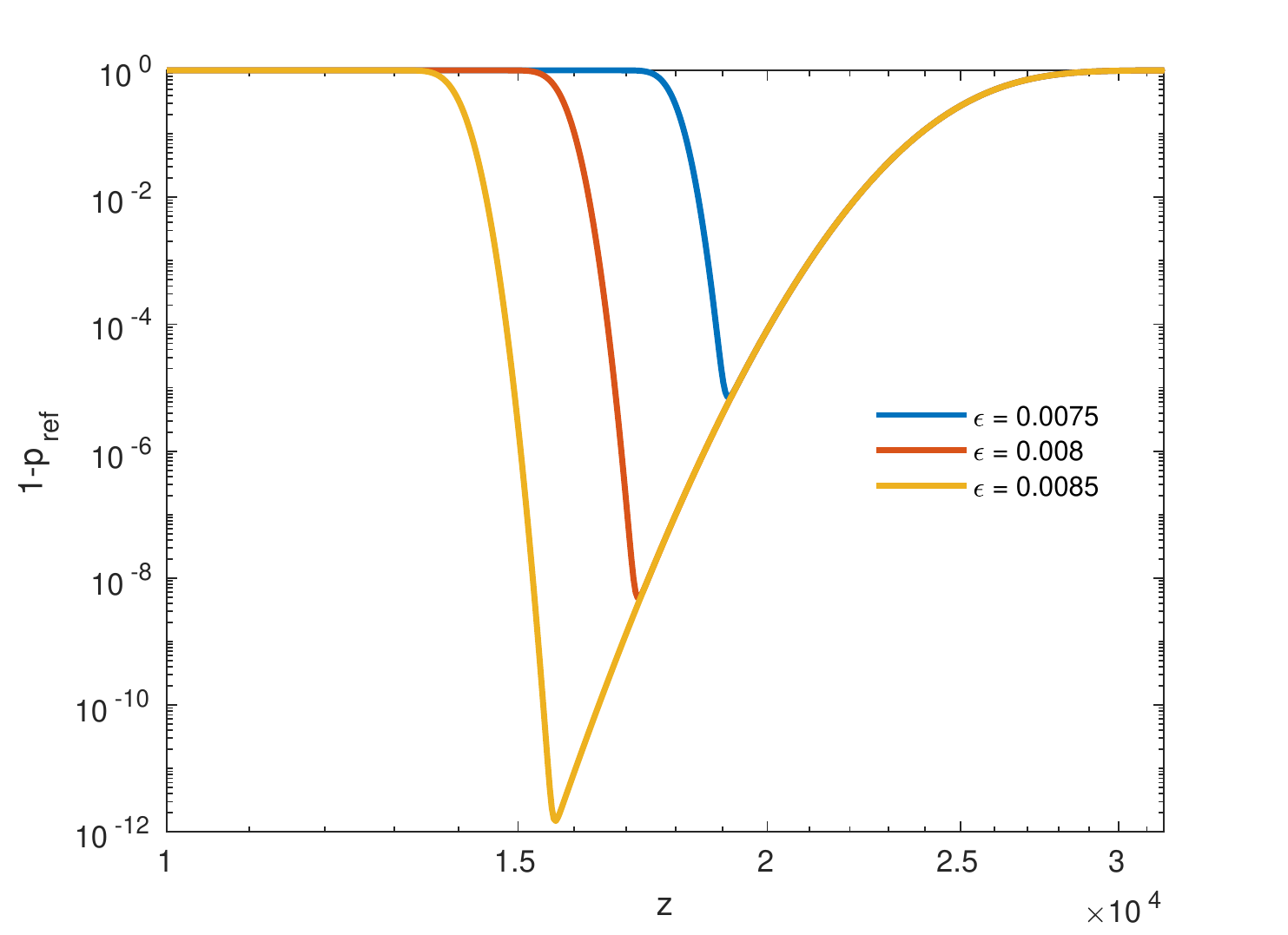}
    \caption{Left: The norm of $A$ for various sizes of $R$. Right: Upper bound on $1-p_{\rm ref}$ for different values of $\eps$ and $z\geq 0$.}
    \label{fig:prob}
\end{figure}
However, we show that under some assumptions on the distribution of images, there is a high probability that the refinement property~\eqref{refprop} holds for a given image.

\begin{lemma}\label{lem:chisquared}
Let $\Phi_{k,\lambda}$ denote the cdf of the non-central chi-squared distribution with $k\in\N$ degrees of freedom and non-centrality parameter $\lambda\geq 0$. Then, $\Phi_{k,\lambda}(x)\leq \Phi_{k,\lambda'}(x)$ for all $0\leq x<\infty$ and all $\lambda\geq \lambda'$.
\end{lemma}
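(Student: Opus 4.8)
The plan is to use the representation of the non-central chi-squared distribution as a Poisson mixture of central chi-squared distributions and to reduce the claim to the elementary stochastic monotonicity of the central chi-squared family in its number of degrees of freedom. Writing $F_m := \Phi_{m,0}$ for the central chi-squared cdf with $m$ degrees of freedom, the standard mixture identity reads
\begin{equation*}
\Phi_{k,\lambda}(x) = \sum_{j=0}^\infty e^{-\lambda/2}\frac{(\lambda/2)^j}{j!}\, F_{k+2j}(x),
\end{equation*}
so that $\Phi_{k,\lambda}(x)$ is an average of the values $F_{k+2j}(x)$ against the Poisson weights $p_j(\lambda/2)=e^{-\lambda/2}(\lambda/2)^j/j!$. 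Increasing $\lambda$ shifts these weights toward larger indices $j$, i.e.\ toward larger degrees of freedom, which should decrease the cdf.

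The first step is to record that central chi-squared distributions are stochastically ordered in their degrees of freedom: $F_{m+2}(x)\leq F_m(x)$ for all $x\geq 0$. This follows from the additive representation --- a chi-squared variable with $m+2$ degrees of freedom is distributed as the sum of independent chi-squared variables with $m$ and $2$ degrees of freedom, hence dominates the former pathwise --- so its cdf lies pointwise below $F_m$.

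The second step is to differentiate the mixture in $\lambda$. Setting $\mu=\lambda/2$ and differentiating term by term (justified by the local uniform convergence of the series and of its formal $\mu$-derivative on compact $\mu$-intervals, via dominated convergence), the derivatives of the Poisson weights produce two sums which, after reindexing $j\mapsto j-1$ in one of them, collapse to
\begin{equation*}
\frac{\partial}{\partial\mu}\Phi_{k,\lambda}(x) = \sum_{j=0}^\infty e^{-\mu}\frac{\mu^j}{j!}\big(F_{k+2j+2}(x)-F_{k+2j}(x)\big).
\end{equation*}
By the first step every summand is nonpositive, so $\partial_\lambda\Phi_{k,\lambda}(x)=\tfrac12\,\partial_\mu\Phi_{k,\lambda}(x)\leq 0$ for each fixed $x$ and $k$. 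Integrating in $\lambda$ from $\lambda'$ to $\lambda$ (and using continuity down to $\lambda'=0$, where $\Phi_{k,0}=F_k$) yields $\Phi_{k,\lambda}(x)\leq\Phi_{k,\lambda'}(x)$ whenever $\lambda\geq\lambda'$, which is the claim.

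The only genuinely delicate point is the justification of the term-by-term differentiation and of the reindexing; both are routine once one observes that the Poisson series and its formal derivative converge locally uniformly in $\mu$. The conceptual content is carried entirely by the stochastic dominance $F_{m+2}\leq F_m$ of the central family, which is where I would concentrate the argument. An alternative that avoids series manipulation would be to verify directly that the non-central chi-squared densities possess a monotone likelihood ratio in $\lambda$, so that the family is stochastically increasing; but the Poisson-mixture route appears to be the shortest self-contained argument.
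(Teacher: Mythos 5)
Your proof is correct, but it follows a genuinely different route from the paper. The paper differentiates $\Phi_{k,\lambda}$ in $\lambda$ via special-function identities: it writes $\Phi_{k,\lambda}(x)=1-Q_{k/2}(\sqrt{\lambda},\sqrt{x})$ with the Marcum Q-function, invokes a known formula for $\partial_\mu Q_{k/2}(\mu,y)$, and arrives at an explicit expression for $\partial_\lambda\Phi_{k,\lambda}(x)$ that is manifestly nonpositive because the modified Bessel function $I_{k/2}$ is nonnegative. You instead use the Poisson mixture representation $\Phi_{k,\lambda}(x)=\sum_{j\ge 0}e^{-\lambda/2}\tfrac{(\lambda/2)^j}{j!}F_{k+2j}(x)$ together with the stochastic ordering $F_{m+2}\le F_m$ of the central family, which follows from the additive decomposition $\chi^2_{m+2}\overset{d}{=}\chi^2_m+\chi^2_2$. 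Your argument is more self-contained and probabilistically transparent: it needs no citations for Marcum-Q derivative identities, and the sign of the derivative is carried by an elementary coupling rather than by positivity of a Bessel function. The paper's computation, in exchange, yields a closed-form expression for the derivative itself, which is stronger information than monotonicity alone. One further simplification available to you: the termwise differentiation (your only technical overhead) can be avoided entirely by Poisson superposition --- for $\lambda\ge\lambda'$ write a Poisson$(\lambda/2)$ variable as $N'+M$ with independent $N'\sim\mathrm{Poisson}(\lambda'/2)$ and $M\sim\mathrm{Poisson}((\lambda-\lambda')/2)$, so that
\begin{equation*}
\Phi_{k,\lambda}(x)=\E\big[F_{k+2N'+2M}(x)\big]\le \E\big[F_{k+2N'}(x)\big]=\Phi_{k,\lambda'}(x),
\end{equation*}
using only that $j\mapsto F_{k+2j}(x)$ is nonincreasing; this removes all questions about interchanging differentiation and summation.
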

\begin{proof}
 We use the well-known identity $\Phi_{k,\lambda}(x)=1-Q_{k/2}(\sqrt{\lambda},\sqrt{x})$,
 where $Q$ denotes the Marcum Q-function. According to~\cite[Equation~(2)]{esposito:1968},
 the derivative can be represented via
 \begin{align*}
     \partial_\mu Q_{k/2}(\mu,y) = \mu\big(Q_{k/2+1}(\mu,y)-Q_{k/2}(\mu,y)\big).
 \end{align*}
    Together with~\cite[Equation~(6)]{twist}, this leads to
\begin{equation}\label{lambdasign}
    \partial_\lambda \Phi_{k,\lambda}(x)=\partial_\lambda(1-Q_{k/2}(\sqrt{\lambda},\sqrt{x}))=-\frac{1}{2\sqrt{\lambda}}\left(\sqrt{\lambda}\left(\frac{\sqrt{x}}{\sqrt{\lambda}}\right)^{k/2}{\rm e}^{-\frac{\lambda+x}{2}}I_{k/2}(\sqrt{x\lambda})\right),
\end{equation}
where the modified Bessel function of the first kind is defined as
\begin{equation*}
I_{k/2}:=\sum_{k=0}^{\infty}\frac{1}{k!\Gamma (k+k/2+1)}\left(\frac{x}{2}\right)^{2k+k/2}.
\end{equation*}
This shows $\partial_\lambda \Phi_{k,\lambda}(x)\leq 0$ for all $0\leq x< \infty$ and concludes the proof.
\end{proof}

\begin{theorem}\label{thm:main}
 Let $I\in\R^{h\times w}$ denote the original image and let $\widetilde I\in\R^{h\times w}$ denote a random distortion obtained by adding $\eps \zeta_i$ to the $i$-th pixel, where the $\zeta_i$ are i.i.d. standard normal random numbers. Then, for all $C,z\geq 0$, the refinement property~\eqref{refprop} is true with  probability 
 \begin{align*}
  p_{\rm ref}\geq     \Big(1- \Phi_{448,0}\Big(\frac{\delta^2 z}{C^2-\delta^2}\Big)\Big)\Phi_{64,\eps^{-2}}(z)\quad\text{for all }C,z\geq 0
 \end{align*}
 and $C_0=2+2C$ for all $R\in\TT\in \T$.  The constant $\delta$ satisfies $\delta\leq 1$ but can numerically be tracked to $\delta\leq 0.13$ for realistic image sizes.
\end{theorem}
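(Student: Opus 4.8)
The plan is to turn both sides of the refinement property~\eqref{refprop} into squared distances to linear subspaces and then exploit the Gaussian model through two \emph{independent} chi-squared events. Write $\widetilde x:=\widetilde I|_R$ for the noisy restriction of the image to an element $R$ of size $h_R\times w_R$. The approximation rule~\eqref{appi2} keeps exactly the top-left $8\times 8$ block of DCT coefficients, and since $\textup{DCT}_R$ is orthonormal, Parseval's identity identifies $I_{{\rm app},\TT}|_R$ with the orthogonal projection $P_U\widetilde x$ of $\widetilde x$ onto the $64$-dimensional space $U$ spanned by the retained low-frequency modes, so that $hw\,\eta(R)^2=\|(I-P_U)\widetilde x\|^2$. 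Refining $R$ into its four children replaces $U$ by the space $W:=\bigoplus_{i=1}^4 U_{R_i}$ of dimension $4\cdot 64=256$ (the children occupy disjoint blocks, so these low-frequency spaces are mutually orthogonal), and $hw\sum_{i=1}^4\eta(R_i)^2=\|(I-P_W)\widetilde x\|^2$. Hence~\eqref{refprop} becomes the purely linear-algebraic inequality $\|(I-P_W)\widetilde x\|^2\le C_0\,\|(I-P_U)\widetilde x\|^2$ between two projection errors.

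The obstruction in Figure~\ref{refpropex} is precisely that $U\not\subseteq W$: a mode that is low-frequency on $R$ can acquire high-frequency content on the children, so no deterministic bound survives when $(I-P_U)\widetilde x$ vanishes. I would isolate this leakage through the operator $A:=(I-P_W)P_U$, whose range lies in $W^\perp$ and whose norm $\delta:=\|A\|$ measures exactly how far $U$ sticks out of $W$. Splitting $\widetilde x=P_U\widetilde x+(I-P_U)\widetilde x$ and applying $(I-P_W)$ gives $(I-P_W)\widetilde x=A\widetilde x+(I-P_W)(I-P_U)\widetilde x$; using $\|(I-P_W)(I-P_U)\widetilde x\|\le\|(I-P_U)\widetilde x\|$ and $(a+b)^2\le 2a^2+2b^2$ yields
\[ \|(I-P_W)\widetilde x\|^2\le 2\|A\widetilde x\|^2+2\|(I-P_U)\widetilde x\|^2 . \]
Together with $\|A\widetilde x\|^2\le\delta^2\|P_U\widetilde x\|^2$, everything reduces to the single event that the leakage energy $\delta^2\|P_U\widetilde x\|^2$ is dominated by the high-frequency energy $\|(I-P_U)\widetilde x\|^2$, on which~\eqref{refprop} holds. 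The intuition for why noise helps is that, even when the image itself has no high frequencies on $R$, the noise contributes a high-frequency energy of order $\eps^2\dim U^\perp$, which is large because $\dim U^\perp$ is large.

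For the probabilistic core I would estimate this event under $\widetilde x=x+\eps\zeta$. Since $P_U$ and $I-P_U$ are orthogonal projections of a Gaussian vector, $P_U\widetilde x$ and $(I-P_U)\widetilde x$ are independent, so the two competing energies are independent and the probability factorizes. Suitably scaled by $\eps^{-2}$, the low-frequency energy is a non-central $\chi^2$ with $64$ degrees of freedom whose non-centrality is the normalized low-frequency signal energy, bounded by $\eps^{-2}$ under the paper's normalization; the high-frequency energy is a non-central $\chi^2$ with $\dim U^\perp=h_Rw_R-64\ge 512-64=448$ degrees of freedom, the bound $448$ coming from the smallest admissible refinable element $16\times 32$. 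I would condition on the low-frequency energy being at most $z$, an event of probability at least $\Phi_{64,\eps^{-2}}(z)$ by Lemma~\ref{lem:chisquared} (monotonicity of the c.d.f.\ in the non-centrality), and intersect it with the event that the high-frequency energy exceeds the matching threshold; dropping the nonnegative signal non-centrality (again Lemma~\ref{lem:chisquared}) and using stochastic monotonicity of the central $\chi^2$ in its degrees of freedom bounds the latter from below by $1-\Phi_{448,0}(\,\cdot\,)$. Multiplying the two independent probabilities gives $p_{\rm ref}$.

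The main obstacle is the deterministic input $\delta=\|(I-P_W)P_U\|$: one must show that the retained low-frequency DCT modes of a rectangle are almost entirely captured by those of its four children, i.e.\ that the largest principal angle between $U$ and $W$ is small, and track the resulting constant uniformly over all admissible element sizes (numerically $\delta\le 0.13$, Figure~\ref{fig:prob}). This is where the tensor-product structure of the 2D DCT and the nestedness of the elements (Lemma~\ref{nestedness}) enter. The precise threshold $\delta^2 z/(C^2-\delta^2)$ and the value of $C_0=2+2C$ then follow by carrying out the split between the two chi-squared events sharply rather than through the crude factor-of-two estimate used above.
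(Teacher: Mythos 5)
Your proposal follows essentially the same route as the paper's proof: your leakage operator $A=(\mathrm{Id}-P_W)P_U$ is precisely the paper's operator $A$ written in abstract projection language, the factor-of-two split via the triangle inequality is the paper's estimate $\sum_i\eta(R_i)^2\le 2\eta(R)^2+2\|A\widetilde I\|_2^2$, and the probabilistic core (independence of the low- and high-frequency Gaussian energies, non-central chi-squared laws with $64$ and at least $448$ degrees of freedom from the smallest refinable element, and the monotonicity from Lemma~\ref{lem:chisquared}) reproduces the paper's argument, with your conditioning on $\{X\le z\}$ and multiplying independent events being a clean equivalent of the paper's integral manipulation. The only deviations are in constant bookkeeping — your sharper bound $\|A\widetilde x\|\le\delta\|P_U\widetilde x\|$ instead of the paper's $\delta\|\widetilde x\|$ shifts the threshold away from the stated $\delta^2z/(C^2-\delta^2)$ — which you correctly flag and which does not change the substance of the proof.
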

\begin{proof}
Let $R$ denote an element and let $\widetilde R$ denote a child of $R$. We recall the operators $\textup{TL}_{\widetilde R}$, ${\rm TL}^{-1}_{\widetilde R}$, $\textup{DCT}_{\widetilde R}$, and $\textup{IDCT}_{\widetilde R}$ defined in~\eqref{eq:ops} above.
For $i\in\{1,..,4\}$, we consider the operators
\begin{align*}
& A_{\widetilde R}\colon R\to \widetilde R\\
&A_{\widetilde R}:= (\textup{Id}-{\rm TL}^{-1}_{\widetilde R}\circ\textup{TL}_{\widetilde R})\circ \textup{DCT}_{\widetilde R}\circ\textup{P}_{\widetilde R}\circ\textup{IDCT}_R\circ{\rm TL}_R^{-1}\circ\textup{TL}_R\circ\textup{DCT}_R,
\end{align*}
where ${\rm P}_{\widetilde R}$ denotes the restriction of $R$ to $\widetilde R$ as well as
\begin{align*}
    &A\colon R\to R\\
    &A(I|_R)|_{R_i}:= A_{R_i}(I|_R),\quad\text{for }i=1,\ldots,4.
\end{align*}
As we will see below, the operator $A_{R}$ encodes the difference between the error on the coarse element $R$ and the fine element $R_i$.
As  compositions of orthogonal projections, embeddings and (inverse) discrete cosine transforms, the operators $A_{R_i}$ are bounded uniformly in $R$, $R_i$. Given $R_i$ and $R$, we may compute their norm exactly by calculating the corresponding matrix norm. Obviously, there holds $\|A\|\leq \delta=1$, but we can track the norm numerically for various realistic image sizes (see Fig.~\ref{fig:prob} for the results). This suggests
that $\delta=0.13$ is a realistic upper bound and improves the constant in the statement compared to $\delta=1$. Note that all the remaining arguments of the proof remain valid with the naive bound $\delta=1$.

We aim to argue that the refinement property~\eqref{refprop} is satisfied for most images. By assumption, we have  $\widetilde{I}(i,j)=I(i,j)+\varepsilon \xi_{i,j}$ for i.i.d. standard normal variables $\xi_{i,j}$.  Note that $\widetilde{I}$ is a random variable and the law of its $L^2$-norm follows a non-central chi-squared distribution. The local error $\eta(R_i)$ can be identified with the difference of $I_{R_i}$ and the corresponding approximate image on $R_i$. Therefore, we compute for all children $\widetilde R$ of $R$
\begin{align*}
\sum_{i=1}^4\eta(R_i)^2=& \sum_{i=1}^4\|(\textup{Id}-{\rm TL}^{-1}_{R_i}\textup{TL}_{R_i})\textup{DCT}_{R_i}{\rm P}_{R_i}(\widetilde{I})\|_2^2\\ \leq& \sum_{i=1}^42\|(\textup{Id}-{\rm TL}^{-1}_{R_i}\textup{TL}_{R_i})\textup{DCT}_{R_i}{\rm P}_{R_i}\textup{IDCT}_R(\textup{Id}-{\rm TL}^{-1}_{R}\textup{TL}_{R})\textup{DCT}_R(\widetilde{I})\|_2^2\\&+2\|A(\widetilde{I})\|_2^2\\
\leq& \sum_{i=1}^4 2\|{\rm P}_{R_i}\textup{IDCT}_R(\textup{Id}-{\rm TL}^{-1}_{R}\textup{TL}_{R})\textup{DCT}_R(\widetilde{I})\|_2^2\\&+2\|A(\widetilde{I})\|_2^2\\
\leq &2\|(\textup{Id}-{\rm TL}^{-1}_{R}\textup{TL}_{R})\textup{DCT}_R(\widetilde{I})\|_2^2+2\delta\|\widetilde{I}\|_2^2
=2\eta(R)^2 +2\delta\|\widetilde{I}\|_2^2,
\end{align*}
where we used the triangle inequality and the identity
\begin{equation*}
\textup{Id}=\textup{IDCT}_R(\textup{Id}-{\rm TL}^{-1}_{R}\textup{TL}_{R})\textup{DCT}_R+\textup{IDCT}_R({\rm TL}^{-1}_{R}\textup{TL}_{R}(\textup{DCT}_R)).
\end{equation*}
Next, we show for $C>0$ that
\begin{equation}\label{namelessequ}
\delta/C\|\widetilde{I}\|_2=\delta/C\|{\rm DCT}_R(\widetilde{I})\|_2\leq \|(\textup{Id}-{\rm TL}^{-1}_{R}\textup{TL}_{R})\textup{DCT}_R(\widetilde{I})\|_2=\eta(R)
\end{equation}
holds with high probability. We denote this probability with $0\leq p_{\rm ref}\leq 1$.
Note that adding the noise to $I$ is equivalent to adding the noise to $\textup{DCT}_R(\widetilde{I})$, since the discrete cosine transform is just a change of orthonormal bases.
Thus, it remains to estimate the norm of
\begin{align*}
 \Big((\textup{Id}-{\rm TL}^{-1}_{R}\textup{TL}_{R}) \widetilde J\Big)_{ij}=\begin{cases}
 0 & i\leq 8 \text{ or }j\leq 8,\\
 \widetilde J_{ij}&\text{else},
 \end{cases}
\end{align*}
where $\widetilde J := ({\rm DCT}_{R}(I)_{i,j} + \eps \xi_{i,j})$. Thus, it suffices to estimate the probability for 
\begin{align*}
    \delta/C\norm{\widetilde J}{2}\leq \norm{(\textup{Id}-{\rm TL}^{-1}_{R}\textup{TL}_{R}) \widetilde J}{2}.
\end{align*}
With $A:=\set{(i,j)\in\N^2}{1\leq i\leq h_R,\,1\leq j\leq w_R}$ and $B:= A\setminus \{1,\ldots,8\}^2$, this can be rewritten as
\begin{align*}
   p_{\rm ref}= \P\Big( \delta^2/C^2&\sum_{(i,j)\in A} ({\rm DCT}_R( I)_{i,j} + \eps\xi_{i,j})^2\leq 
     \sum_{(i,j)\in B} ({\rm DCT}_R( I)_{i,j} + \eps\xi_{i,j})^2\Big).
\end{align*}
Without loss of generality, we may assume that $\norm{I}{2}=1$ and hence $\sum_{1\leq i,j\leq 8} {\rm DCT}_R( I)_{i,j}^2\leq 1$, which leads to
\begin{align*}
    p_{\rm ref}\geq \P\Big( \delta^2/C^2 X\leq  (1-\delta^2/C^2)Y\Big),
\end{align*}
with independent random variables $X:=\sum_{1\leq i,j\leq 8} ({\rm DCT}_R( I)_{i,j}/\eps +\xi_{i,j})^2$ and $Y:=\sum_{(i,j)\in B} (\eps^{-1}{\rm DCT}_R( I)_{i,j} + \xi_{i,j})^2$.  We set $\mu=\delta^2/C^2$ and denote by $\phi_X$, $\phi_Y$ and $\Phi_X$, $\Phi_Y$ the respective density and cumulative density functions of $X$ and $Y$.  This leads to
\begin{align*}
    p_{\rm ref}&= \int_0^\infty \int_{\mu x/(1-\mu)}^\infty \phi_Y(y)\,dy \phi_X(x)\,dx=
    \int_0^\infty \Big(1-\Phi_Y(\frac{\mu x}{1-\mu})\Big)\phi_X(x)\,dx\\
    &\geq 
     \int_0^\infty \Big(1-\Phi_{448,0}(\frac{\mu x}{1-\mu})\Big)\phi_X(x)\,dx,
\end{align*}
where we used Lemma~\ref{lem:chisquared} for the last inequality as well as the fact that $Y$ contains at least $32\cdot 16-64=448$ terms. This follows from the fact that Algorithm~\ref{alg:adaptive} only refines elements with $h_R,w_R\geq 16$. For $h_R=w_R=16$, the left-hand side of~\ref{refprop} is zero by definition. Since $w_R$, $h_R$ are powers of two by assumption, the next smallest element $R$ satisfies $\max\{h_R,w_R\}\geq 32$, such that $\#B\geq 32\cdot 16-64 = 448$.
For $z\geq0$, we may estimate
\begin{align*}
     p_{\rm ref}&\geq 1-
    \int_0^\infty \Phi_{448,0}(\frac{\mu x}{1-\mu})\phi_{64,0}(x)\,dx\geq 1- 
     \int_0^z \Phi_{448,0}(\frac{\mu x}{1-\mu})\phi_{X}(x)\,dx - (1-\Phi_{64,\eps^{-2}}(z))\\
     &\geq 
     (1- \Phi_{448,0}\Big(\frac{\mu z}{1-\mu}\Big))\Phi_{64,\eps^{-2}}(z).
\end{align*}
Combining the estimates, we obtain with probability $p_{\rm ref}$ that
\begin{align*}
\sum_{i=1}^4 \eta(\widetilde R_i)^2\leq (2+2C)\eta (R)^2.
\end{align*}
This concludes the proof.
\end{proof}

The cdfs of non-centralized chi squared distributions can be found, e.g. in \cite{chi:2022} and can be numerically calculated in, e.g., Matlab via the command \texttt{ncx2cdf}.
This allows us to evaluate the probability bound from Theorem~\ref{thm:main} for realistic values of $\delta=0.13$ and $C=1$. This implies the refinement property~\eqref{refprop} with $C_0=4$ with probability
\begin{equation*}
p_{\rm ref}\geq
\left\{
\begin{array}{ll}
1- 7.3\cdot 10^{-6}& \varepsilon=0.0075, \\
1-5.4\cdot 10^{-9} & \varepsilon=0.008, \\
1-1.6\cdot 10^{-12} & \eps = 0.0085.
\end{array}
\right.
\end{equation*}
See Figure~\ref{fig:prob} for the behavior of the lower bound for $p_{\rm ref}$ from Theorem~\ref{thm:main} with respect to $z$.

\begin{remark}\label{rem:probexample}
We note that Theorem~\ref{thm:main} implies that a noise amplitude $\varepsilon = 0.0075$, ensures a probability of 99.99\% to satisfy the refinement property~\eqref{refprop} with $C_0=4$. We also emphasize that such a noise is not noticeable to the human eye as demonstrated in Figure~\ref{noise} even for an amplitude of 0.015. This suggests that images which do not satisfy~\eqref{refprop} are the rare exception.
\end{remark}
\begin{figure}[h]
\centering
	\includegraphics[scale=0.05]{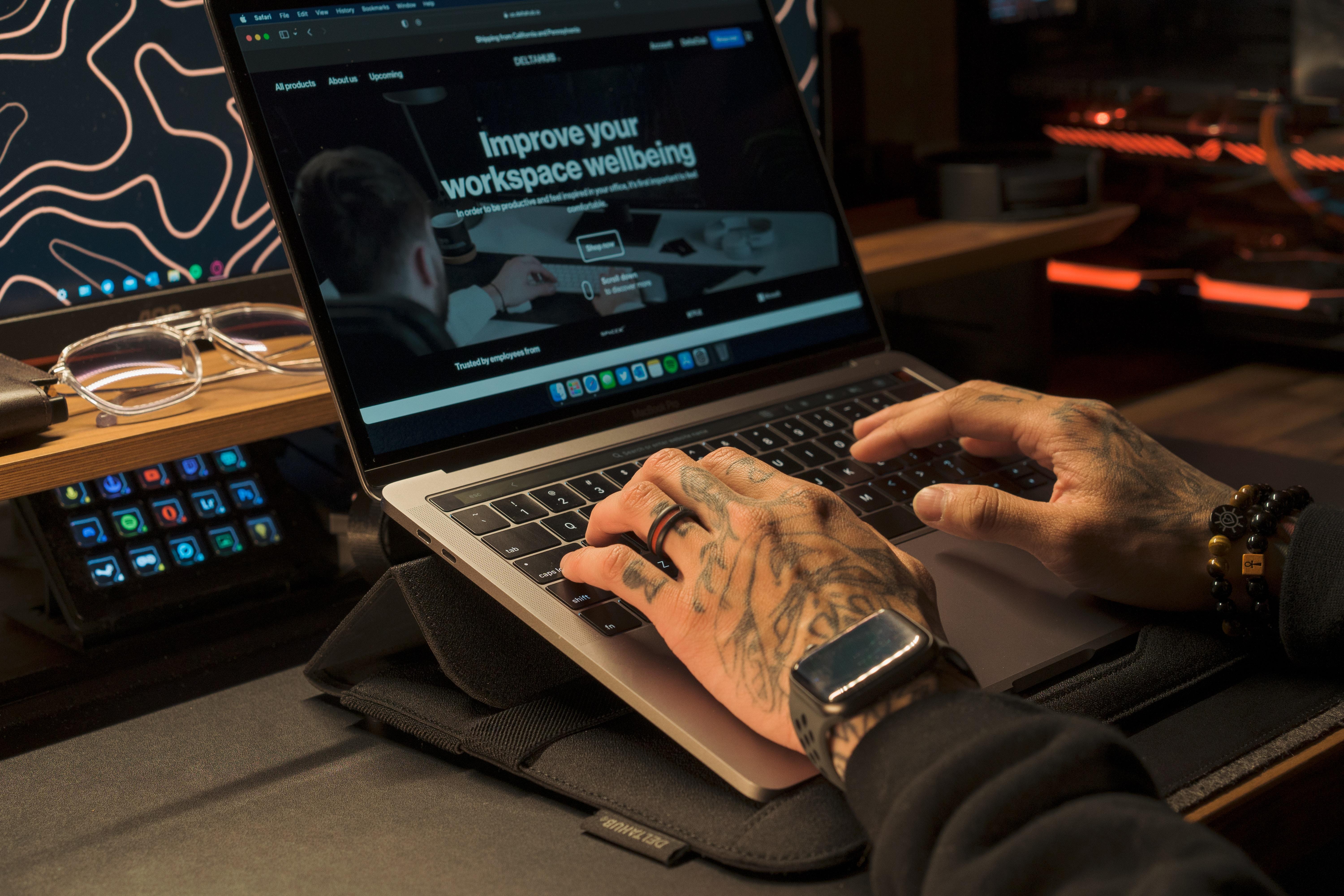}\\
    \includegraphics[scale=0.05]{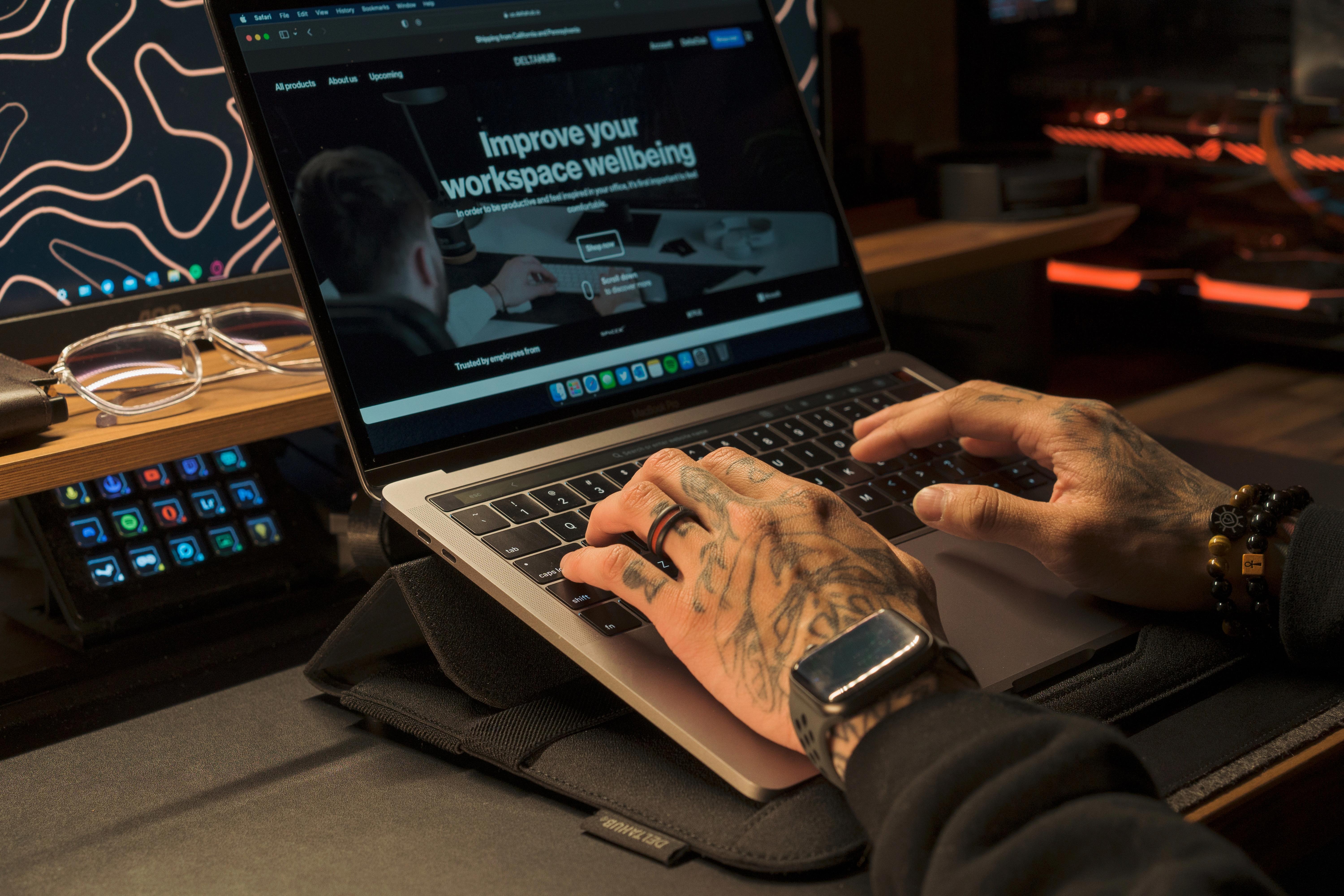}
	\caption{Top: The original image. Bottom: The original image with an additional noise of amplitude 0.015. Image from unsplash.com by artist Kadyn Pierce.}
	\label{noise}
\end{figure}

The following result states the optimality of Algorithm~\ref{alg:adaptive}.
\begin{theorem}\label{thm:optimality}
Let $I$ be such that the refinement property~\eqref{refprop} is true.
Then, for all iterations $\ell=0,~1,\ldots$ of Algorithm~\ref{alg:adaptive}, the mesh $\TT_\ell$ satisfies
	\begin{equation}\label{5p8}
	E(\TT_\ell)\leq 12(2+2C)^2E_m\quad\text{for all }m\leq \#\TT_\ell/10,
	\end{equation}
 where $C>0$ is the constant from Theorem~\ref{thm:main}.
	 To create $\TT$ the algorithm uses at most $12(2+2C)^2(\#\TT_\ell-\#\TT_0+1)$ arithmetic operations and computations of $\eta$.
\end{theorem}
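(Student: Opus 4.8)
The plan is to reduce the claim to the abstract near-optimality theorem for adaptive tree approximation established in~\cite{binev:2004}. The starting point is that the collection $\T$ of admissible meshes, together with the bisection refinement rule, is exactly the tree structure treated there: each element $R$ is a node, its four children $R_1,\ldots,R_4$ are the nodes produced by one refinement, and a mesh $\TT\in\T$ is a finite subtree. The local error $\eta(R)$ plays the role of the node error in~\cite{binev:2004}, and the modified error $\widetilde\eta$ is defined by precisely the recursion used there, whose purpose is to encode level information and force the non-local refinement behaviour needed for optimality. The single structural hypothesis required by that framework is the refinement property~\eqref{refprop}, i.e.\ subadditivity of $\eta^2$ with constant $C_0=2+2C$; this is exactly what we assume in the statement (and what Theorem~\ref{thm:main} guarantees with high probability).

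First I would verify that Algorithm~\ref{alg:adaptive} is the greedy procedure analysed in~\cite{binev:2004}, namely that it repeatedly refines the node(s) of maximal modified error. Since $\widetilde\eta$ is constant among siblings, marking all elements attaining $\max_{R'\in\TT_\ell}\widetilde\eta(R')$ selects one or more complete sibling groups, which agrees with Binev's one-node-at-a-time rule up to the order of refinements within a batch; because the modified error of a child depends only on its parent, refining one current maximiser leaves the others maximal, so each $\TT_\ell$ coincides with one of the intermediate meshes of the single-step algorithm. The size constraint $\min\{h_R,w_R\}\geq 16$ merely truncates the tree at the level of $8\times 8$ leaves; as noted after~\eqref{refprop} such leaves contribute zero to the left-hand side of the refinement property, so no subadditivity is lost and the abstract hypothesis persists on the truncated tree.

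With these identifications, the error estimate~\eqref{5p8} is a direct instance of the main bound of~\cite{binev:2004}: subadditivity with constant $C_0$ yields $E(\TT_\ell)\le 12\,C_0^2\,E_m$ whenever $m\le\#\TT_\ell/10$, and substituting $C_0=2+2C$ gives the stated form. The factor $12$ and the density ratio $1/10$ are the universal constants produced by Binev's analysis and carry no dependence on $I$, as required by Definition~\ref{def:nearopt}. For the complexity claim I would invoke the linear-time implementation from the same reference: each iteration creates only the four children of each marked sibling group, and the child modified errors are updated locally rather than recomputed over the whole mesh, while the repeated extraction of the maximal modified error is handled with a suitable sorted data structure. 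Summing over iterations bounds the total arithmetic operations and evaluations of $\eta$ by a constant multiple of the number of elements ever created, i.e.\ $12\,C_0^2(\#\TT_\ell-\#\TT_0+1)$.

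The step I expect to be the main obstacle is the bookkeeping that certifies the exact correspondence with Binev's framework: that the batched marking of all maximal elements, the size truncation, and the aspect-ratio-preserving bisection produce precisely the meshes to which the abstract theorem applies, and that the benchmark class $\mathbb{T}_m$ defining $E_m$ is the same one against which Binev's bound is proved (here one must use that refining below $8\times 8$ cannot lower $\eta$, since the local approximation already retains all available coefficients). Once this identification is pinned down, both~\eqref{5p8} and the operation count follow from the cited results without further estimation.
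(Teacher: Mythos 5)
Your proposal takes essentially the same route as the paper: both reduce the statement to the abstract near-optimality result \cite[Theorem~5.2]{binev:2004} with $K=4$ children, using the refinement property~\eqref{refprop} with $C_0=2+2C$ as the sole hypothesis and inheriting the universal constants $12$ and $1/10$ from that theorem. The one point you defer as ``bookkeeping''---that the benchmark class $\mathbb{T}_m$ here (counting elements, $\#\TT-\#\TT_0\leq m$) matches Binev's class (counting refinements)---is exactly the detail the paper's proof settles explicitly, in one line: each refinement creates precisely three net new elements, so the two definitions are equivalent up to the harmless factor absorbed in the constants.
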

\begin{proof}
	Since the refinement property~\eqref{refprop} is true, the statement follows from~\cite[Theorem~5.2]{binev:2004}, where the number of children is $K=4$. We note that we use a slightly different definition of $\T_n$ and hence $E_n$. In~\cite{binev:2004}, $\T_n$ denotes the set of meshes generated by exactly $n$ refinements, while we count the number of elements. Since each refinement generates exactly three new elements, the number of refinements corresponds to $(\#\TT-\#\TT_0)/3$. Thus, both definitions are equivalent and the statement above is proved.
\end{proof}
\begin{remark}
    Using the values $\eps=0.0075$ and $C=1$ from Remark~\ref{rem:probexample}, Theorems~\ref{thm:main}--\ref{thm:optimality} imply $E(\TT_\ell)\leq 192 E_m$ for all $m\leq \#\TT_\ell/10$ with probability over $99.99\%$.
\end{remark}
\begin{remark}
In Algorithm \ref{alg:adaptive} the use of the largest $\widetilde{\eta}(R)$ may require a sorting procedure with complexity $O(N\textup{log}N)$. To keep the number of operations of order $O(N)$ we can use binary bins instead of sorting. The result and proof can be changed accordingly with only the absolute constant in the estimate of the theorem changing.
\end{remark}

\subsection{Adaptive compression in other norms}\label{sec:bvnorm}
A norm which might be better suited to capture the quality loss in images is the $BV$-norm. A heuristic explanation for this is the fact that the human eye is more sensitive to sharp color/brightness changes which are not captured in the $L^2$-norm.
The $BV$-norm for a function $u\in BV\subset L^1((0,h]\times (0,w])$ is defined by
\begin{equation}\label{bvdef}
\|u\|_{BV}:= \|u\|_1+V(u),
\end{equation}
where
\begin{equation*}
V(u):= \textup{sup}\left\{\int_{(0,h]\times (0,w]}u(x)\textup{div}(\phi (x))dx:\phi \in C_c^1((0,h]\times (0,w],\mathbb{R}^n),~\|\phi\|_{\infty}\leq 1\right\}
\end{equation*}
denotes the total variation of $u$.
For weakly differentiable functions $w\in L^1$, i.e., $w\in W^{2,1}$ the $BV$ norm of $w$ is given by 
\begin{equation*}
\|w\|_{BV}=\|w\|_1+\|\nabla w\|_1.
\end{equation*}
However, in the present setting, the images $I$ and $I_{{\rm app}}$  represent functions which are constant on each pixel~$\square$ of the image. For such a function $u$, the total variation computes to
\begin{align*}
    V(u) = \sup_{\norm{\phi}{\infty}\leq 1} \sum_{\square \in I} \int_\square u{\rm div}(\phi)\,dx = 
    \sup_{\norm{\phi}{\infty}\leq 1} \sum_{\square \in I} \int_{\partial\square\cap I^\circ} u n\cdot \phi\,dx = \sum_{e\in F^\circ} \norm{[u]}{L^1(e)},
\end{align*}
where $n$ denotes the unit normal vector on $\partial\square$, $I^\circ$ denotes the open subset of $\R^2$ occupied by the pixels of $I$, and $F^\circ$ denotes the set of interior interfaces between the pixels. We also use the common notation $[u]|_e= u|_{\square}-u|_{\square'}$ to denote the jump of $u$ over the interface $e=\square\cap \square'\in F^\circ$ between two pixels $\square$ and $\square'$.
Thus the $BV$ norm of the approximation error $(I-I_{{\rm app}})$ can be written as
\begin{equation*}
\|I-I_{{\rm app}}\|_{BV}=\frac{1}{hw}\Big(\|I-I_{{\rm app}}\|_1+ \sum_{e\in F^\circ}\big|[I-I_{{\rm app}}]|_e\big|\Big).
\end{equation*}
The new norm induces new local and global errors
\begin{align}\label{eq:BVest}
\eta(R)=\|(I-I_{{\rm app}})|_R\|_{BV} \quad\text{and}\quad 
E(\TT)=\sum_{R\in\TT}\eta(R)
\end{align}
which can be used to drive Algorithm~\ref{alg:adaptive}.
\begin{remark}
Since we consider norms on finite-dimensional spaces, we immediately obtain equivalence of the $L^2$-norm and $BV$-norm. Therefore, the refinement property~\eqref{refprop} also holds for the $BV$-norm for most images and hence also the near-optimality result Theorem~\ref{thm:optimality}. However, the constants in~\eqref{refprop} for the $BV$-norm depend on the norm equivalence constants, which in turn depend on the image size. A direct computation of the constant seems very difficult as it would involve mapping properties of the discrete cosine transform as an operator on $BV$.
\end{remark}	
	\section{Efficient storage of the adaptive compression}\label{sec:storage}
	The adaptive grid produced by Algorithm~\ref{alg:adaptive} has to be stored in order to reconstruct the compressed image. This potentially produces overhead compared to standard JPEG compression, which uses a predefined grid. However, using some simple optimizations, we demonstrate that this overhead is not significant.
	
	\subsection{Storing $I_{{\rm app}}$}\label{sec:storing}
	Algorithm~\ref{alg:adaptive} is applied to each of the color components $X\in \{Y,C_b,C_r\}$ and produces a corresponding mesh $\TT^X$.
 As we neglected the rounding step of the JPEG compression in the adaptive algorithm, we compress the $X$ component of $I_{\rm app}$, $I^X_{app}$, for each element $R\in\TT^X$ by:
	\begin{itemize}
		\item Compute $F^X(R):= \textup{round}(\textup{TL}_R(\textup{DCT}_R(I^X|_R))./Q)$ 
		\item Store entries from $F^X(R)$ in the order depicted in Figure~\ref{fig:enum} in a vector $S_R^X$ until the last non-zero entry 
	\end{itemize}	
 Note that the discrete cosine transform returns a matrix of coefficients of basis functions, where the coefficients towards bottom and right correspond to basis functions of higher frequency. Typical images usually have small coefficients corresponding to basis functions of high frequency, which means that after rounding they become zero and are dropped.

	\begin{figure}[btp]
		\centering
		\begin{tikzpicture}[scale=0.45]
		
		\draw (0.75,11.25) node{1};
		\draw (2.25,11.25) node{3};
		\draw (3.75,11.25) node{6};
		\draw (5.25,11.25) node{10};
		\draw (6.75,11.25) node{15};
		\draw (8.25,11.25) node{21};
		\draw (9.75,11.25) node{28};
		\draw (11.25,11.25) node{36};
		\draw (0.75,9.75) node{2};
		\draw (2.25,9.75) node{5};
		\draw (3.75,9.75) node{9};
		\draw (5.25,9.75) node{14};
		\draw (6.75,9.75) node{20};
		\draw (8.25,9.75) node{27};
		\draw (9.75,9.75) node{35};
		\draw (11.25,9.75) node{43};
		\draw (0.75,8.25) node{4};
		\draw (2.25,8.25) node{8};
		\draw (3.75,8.25) node{13};
		\draw (5.25,8.25) node{19};
		\draw (6.75,8.25) node{26};
		\draw (8.25,8.25) node{34};
		\draw (9.75,8.25) node{42};
		\draw (11.25,8.25) node{49};
		\draw (0.75,6.75) node{7};
		\draw (2.25,6.75) node{12};
		\draw (3.75,6.75) node{18};
		\draw (5.25,6.75) node{25};
		\draw (6.75,6.75) node{33};
		\draw (8.25,6.75) node{41};
		\draw (9.75,6.75) node{48};
		\draw (11.25,6.75) node{54};
		\draw (0.75,5.25) node{11};
		\draw (2.25,5.25) node{17};
		\draw (3.75,5.25) node{24};
		\draw (5.25,5.25) node{32};
		\draw (6.75,5.25) node{40};
		\draw (8.25,5.25) node{47};
		\draw (9.75,5.25) node{53};
		\draw (11.25,5.25) node{58};
		\draw (0.75,3.75) node{16};
		\draw (2.25,3.75) node{23};
		\draw (3.75,3.75) node{31};
		\draw (5.25,3.75) node{39};
		\draw (6.75,3.75) node{46};
		\draw (8.25,3.75) node{52};
		\draw (9.75,3.75) node{57};
		\draw (11.25,3.75) node{61};
		\draw (0.75,2.25) node{22};
		\draw (2.25,2.25) node{30};
		\draw (3.75,2.25) node{38};
		\draw (5.25,2.25) node{45};
		\draw (6.75,2.25) node{51};
		\draw (8.25,2.25) node{56};
		\draw (9.75,2.25) node{60};
		\draw (11.25,2.25) node{63};
		\draw (0.75,0.75) node{29};
		\draw (2.25,0.75) node{37};
		\draw (3.75,0.75) node{44};
		\draw (5.25,0.75) node{50};
		\draw (6.75,0.75) node{55};
		\draw (8.25,0.75) node{59};
		\draw (9.75,0.75) node{62};
		\draw (11.25,0.75) node{64};
		\draw[step=1.5cm,black,very thin] (0,0) grid (12,12);
		\end{tikzpicture}
		\caption{Visualizing the enumeration $N$ of the matrix entries}\label{fig:enum}
	\end{figure}	
To reconstruct $I_{{\rm app}}^X$ from the vectors $S_R^X$, we also need to store the width $w$ of $I_{{\rm app}}$, the ratio $h/w$ of $I_{{\rm app}}$ as well as the width of $R\in\TT^X$. Furthermore, we need some separator bits denoted by \emph{sep} in order to mark the start of the next element in $S_I$. 
Surprisingly, it is not necessary to store the location of the element $R$ within $I$.
To that end, we introduce a total order on the elements $R$ within a mesh $\TT^X$. Recall that an element $R$ is given by the position of its top-left pixel and its width. Therefore, we can simply order the top-left pixels of the elements:
	
We define the lexicographic order $<$ on the pixels $(i,j)$ of $I$ by
	\begin{equation}\label{eq:order}
	(i,j)<(i',j')\quad\Longleftrightarrow \quad (i<i')\text{ or } \big( (i=i')\text{ and }(j<j')\big).
	\end{equation}
	We then sort the elements $\TT^X=\{R_1^X,\ldots R_{n_X}^X\}$ such that their top-left pixels $(j_{R_i},k_{R_i})$ are ordered with respect to $<$ in an ascending fashion.
	
	We consider the vector $S_I$ for an approximate image $I_{{\rm app}}$ with corresponding meshes $\TT^X$ consisting of elements $R_i^X$, $i=1,..,n_X$ with $X\in \{Y,C_b,C_r\}$. Then the vector $S_I$ has the form
	\begin{align*}
	S_I=\Big(&
	\frac{h}{w},\, 	w ,\,	\textup{width}(R_1^Y) ,\,	S_{R_1^Y}^Y,\,
	{\rm sep},\,
	\textup{width}(R_2^Y) ,\,	S_{R_2^Y}^Y,\,
	{\rm sep},\, \ldots,\,
    \textup{width}(R_{n_Y}^{Y}) ,\,	S_{R_{n_Y}^{Y}}^{Y},\\
 & \qquad \textup{width}(R_1^{C_b}) ,\,	S_{R_1^{C_b}}^{C_b},\,
	{\rm sep},\,\ldots,\, \textup{width}(R_{n_{C_b}}^{C_b}) ,\,	S_{R_{n_{C_b}}^{C_b}}^{C_b},\,\\
 & \qquad \textup{width}(R_1^{C_r}) ,\,	S_{R_1^{C_r}}^{C_r},\,
	{\rm sep},\,\ldots,\, \textup{width}(R_{n_{C_r}}^{C_r}) ,\,	S_{R_{n_{C_r}}^{C_r}}^{C_r}
	\Big).
	\end{align*}
	A further lossless runlength compression step~\cite{hopkins:2021}  is applied to $S_I$ before storing the data. This is similar to the standard JPEG compression and removes the typically many zeros appearing in $S_R^X$.
	
	\subsection{Reconstruction of the compressed image}\label{sec:reconstruct}
	To reconstruct $I_{\rm app}$, we first reverse the lossless compression of $S_I$. 
 We remove $h/w$, and $w$ from $S_I$ and store them.
 Then, we reconstruct each color component $X\in\{Y,C_b,C_r\}$ individually.
 The reconstruction works as follows:
 \begin{algorithm}\label{alg:reconstruct}
 \begin{enumerate}
    \item Initialize empty $I_{\rm app}^X$ with size $h\times w$.
    \item Find the $<$-minimal empty pixel $(i,j)$ in $I_{\rm app}^X$, if no empty pixel exists move to the next color component.
     \item Remove the first available block $(\textup{width}(R^X) ,\,	S_{R^X}^X)$ from $S_I$.
     \item Compute $w_{R^X}=\textup{width}(R^X)$ and $h_{R^X}=w_{R^X}\frac{h}{w}$.
     \item Set 
     \begin{align*}
I_{\rm app}^X|_{\{i,\ldots,i+h_{R^X}-1\}\times \{j,\ldots,j+w_{R^X}-1\}}=\textup{IDCT}_R({\rm TL}^{-1}_{R^X}(Q\odot F^X(R^X))).
	\end{align*}
     and goto Step~2.
\end{enumerate}
\end{algorithm}
\begin{remark}
Searching for the minimal pixel in Step~2 of Algorithm~\ref{alg:reconstruct} is quite costly if done in a naive way. However, we notice that the minimum is always located next to the corner pixels of previously reconstructed elements $R$ (precisely, next to the top-right and bottom-left pixels, see Figure~\ref{fig:list}). Updating a list of those candidate pixels reduces the search effort significantly.
\end{remark}
\begin{figure}
		\centering
		\begin{tikzpicture}[scale=0.4]
		\draw (0,0) rectangle (16,16);
		\draw (0,12) rectangle (4,16);
		\draw (4,12) rectangle (8,16);
		\draw (8,8) rectangle (16,16);
		\draw (0,10) rectangle (2,12);
		\fill[red] (0,9.75) rectangle (0.25,10);
		\draw (0,9.75) rectangle (0.25,10);
		\fill[red] (2,11.75) rectangle (2.25,12);
		\draw (2,11.75) rectangle (2.25,12);
		\fill[red] (8,7.75) rectangle (8.25,8);
		\draw (8,7.75) rectangle (8.25,8);
		\draw (2.125,12.1)--(2.125,14.1);
		\draw (1.8,12.6)--(2.125,12.1);
		\draw (2.45,12.6)--(2.125,12.1);
		\end{tikzpicture}
		\caption{Visualizing the list of candidates for the $<$-minimal empty pixel. The arrow marks the minimum.}\label{fig:list}
\end{figure}
	
 \section{Practical performance of the adaptive compression algorithm}\label{sec:experiments}
We perform a limited number of experiments with Algorithm~\ref{alg:adaptive} in order to test its performance on realistic images depicted in Figure~\ref{fig:meine-grafik}. As shown in Figures~\ref{fig:qual}--\ref{fig:qual2}, we choose the tolerance $\tau$ in Algorithm~\ref{alg:adaptive} such that no quality difference between original and compression is immediately visible. Figure~\ref{fig:storage} compares the storage size of the original JPEG file and the adaptive recompression with Algorithm~\ref{alg:adaptive}.

We observe that storage space for larger JPEG Files can be significantly reduced with adaptive compression, while smaller files show little improvement. 
Since Algorithm~\ref{alg:adaptive} does not produce elements smaller than $8\times 8$, the finest obtainable mesh is the one of the standard JPEG algorithm. Hence, heuristically, the required storage space for any given image is at most about the same as JPEG, although a rigorous statement on the worst case is non-trivial.	
 
\begin{figure}
 \centering
            \includegraphics[height=34mm]{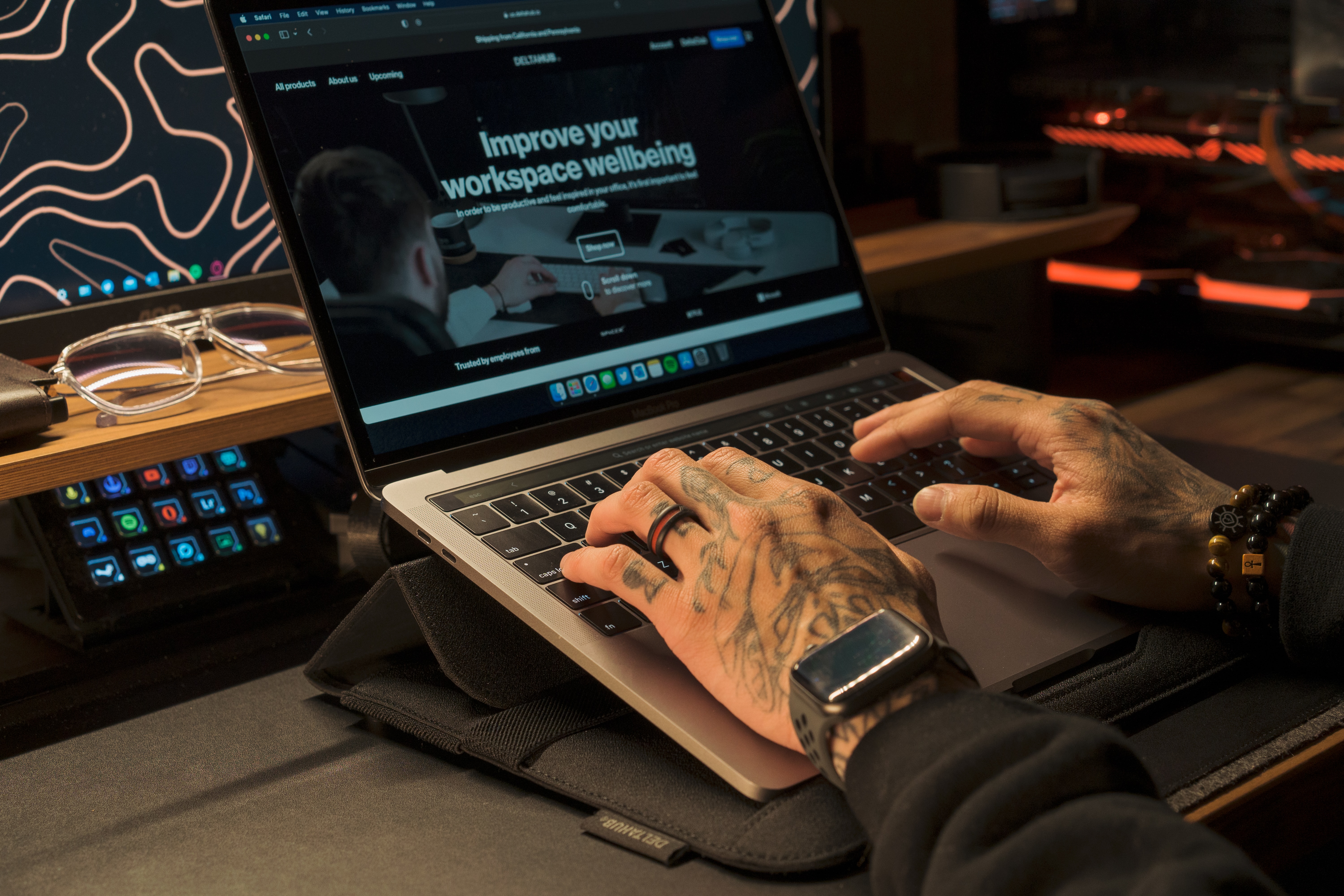}
		\includegraphics[height=34mm]{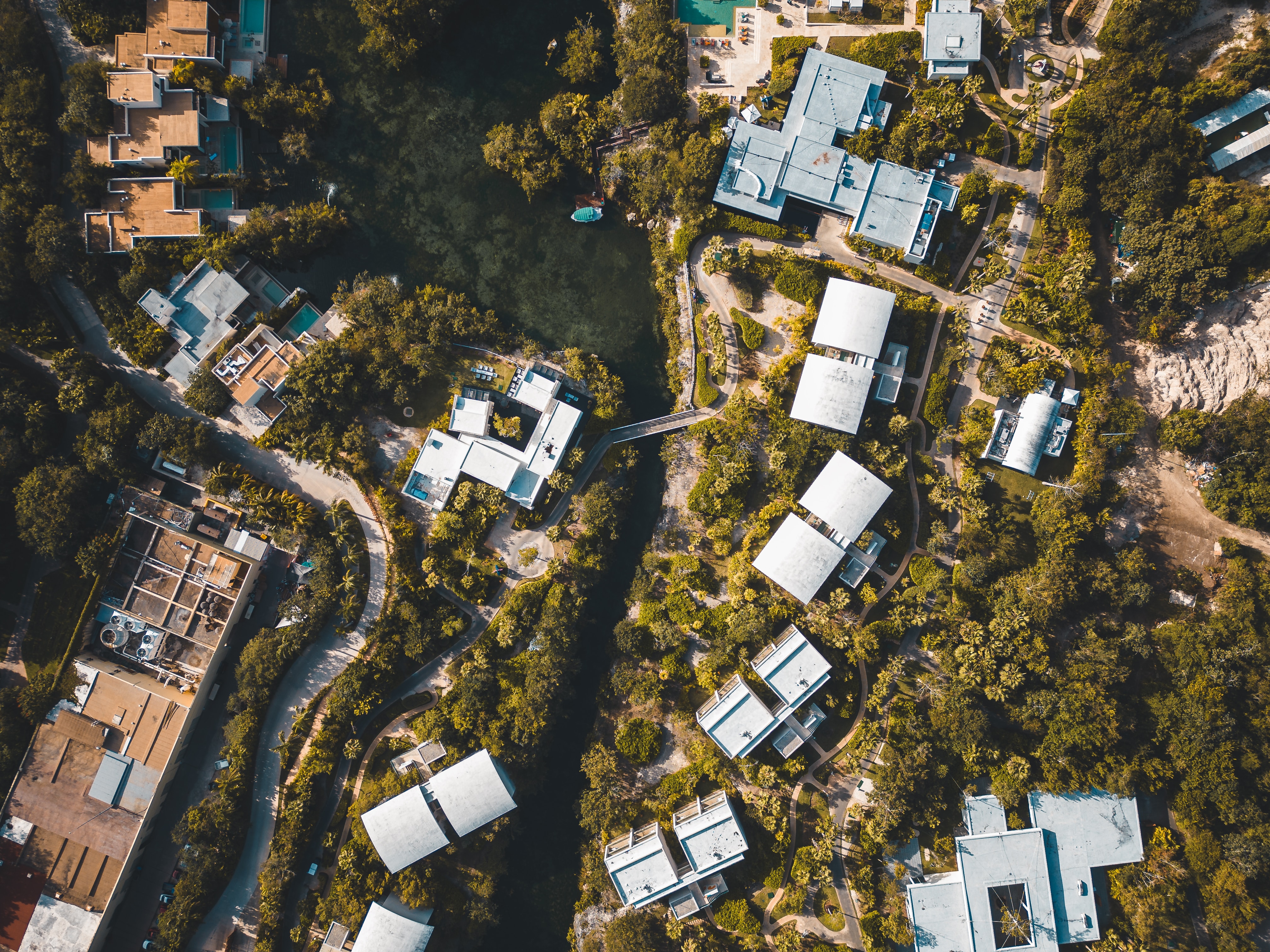}
		\includegraphics[height=34mm]{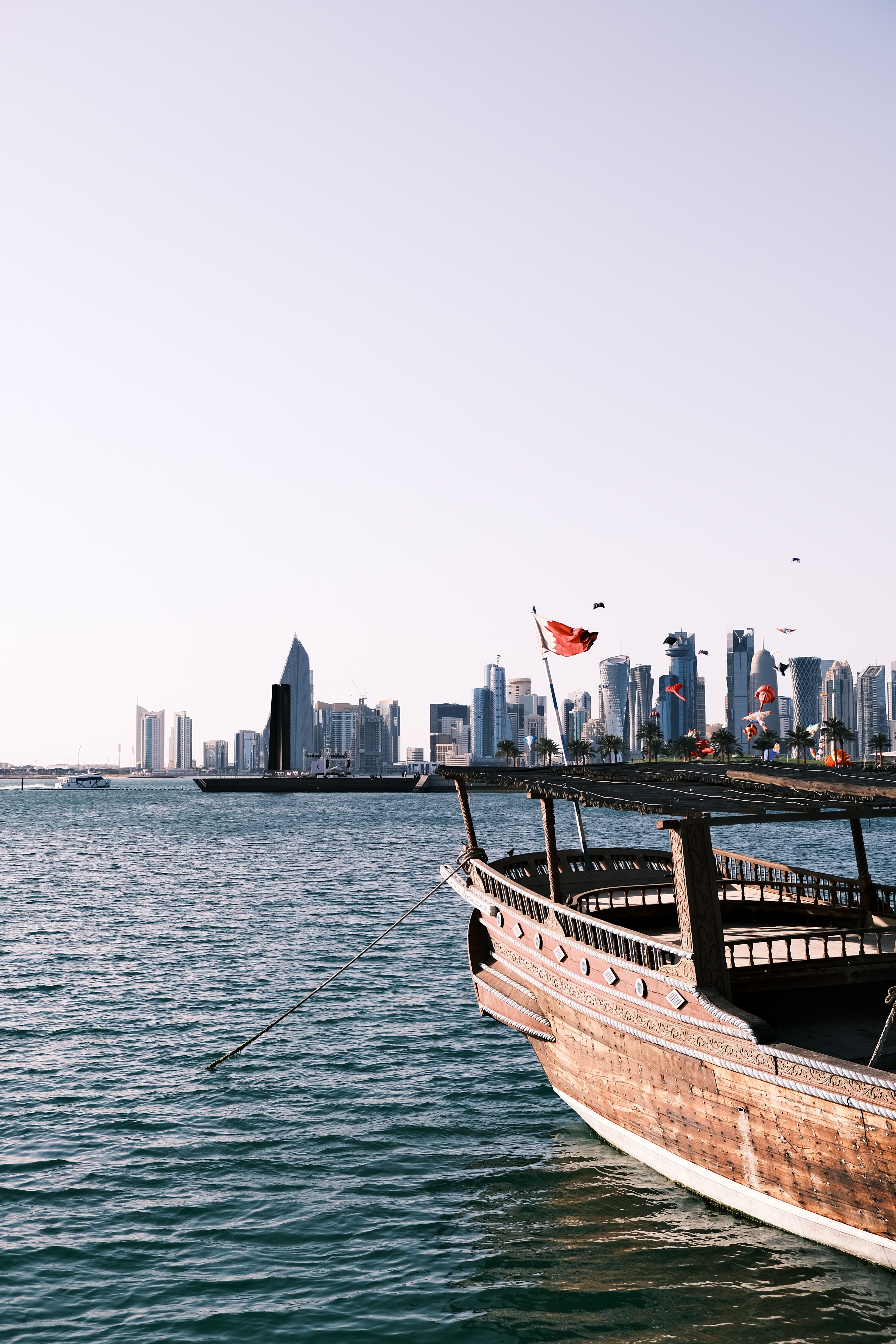}
		\includegraphics[height=34mm]{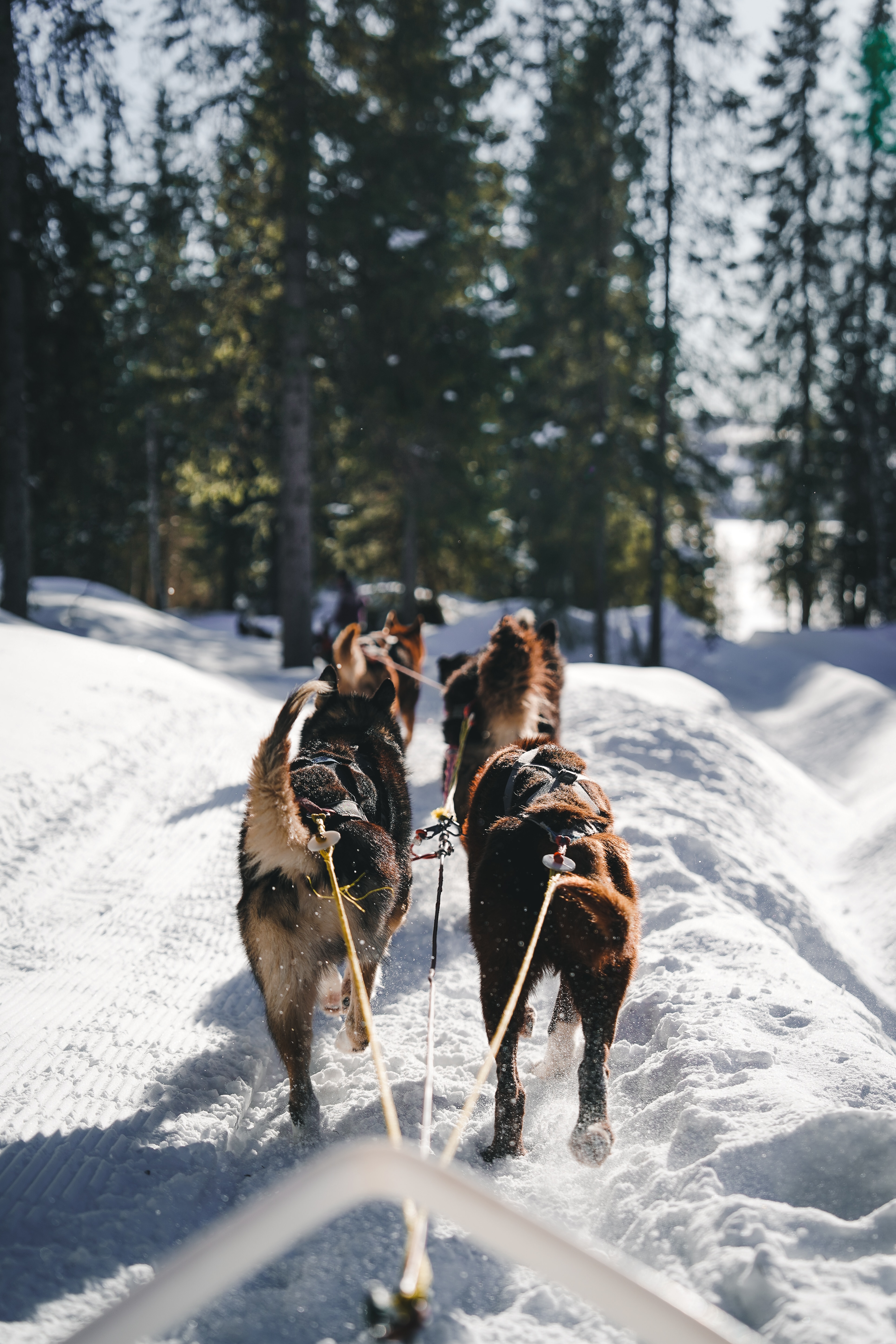}
		\hfill\\
		\includegraphics[height=34mm]{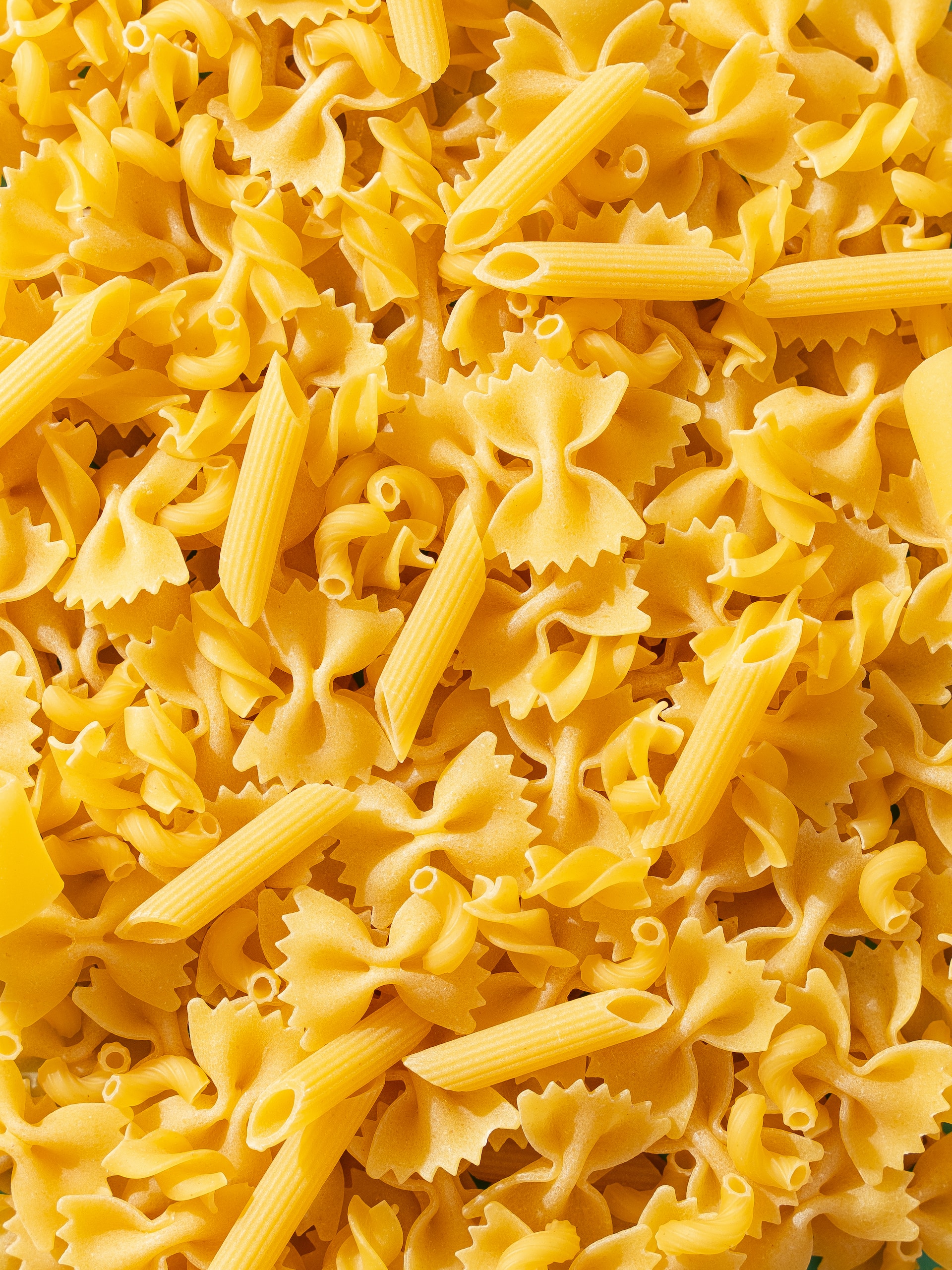}
		\includegraphics[height=34mm]{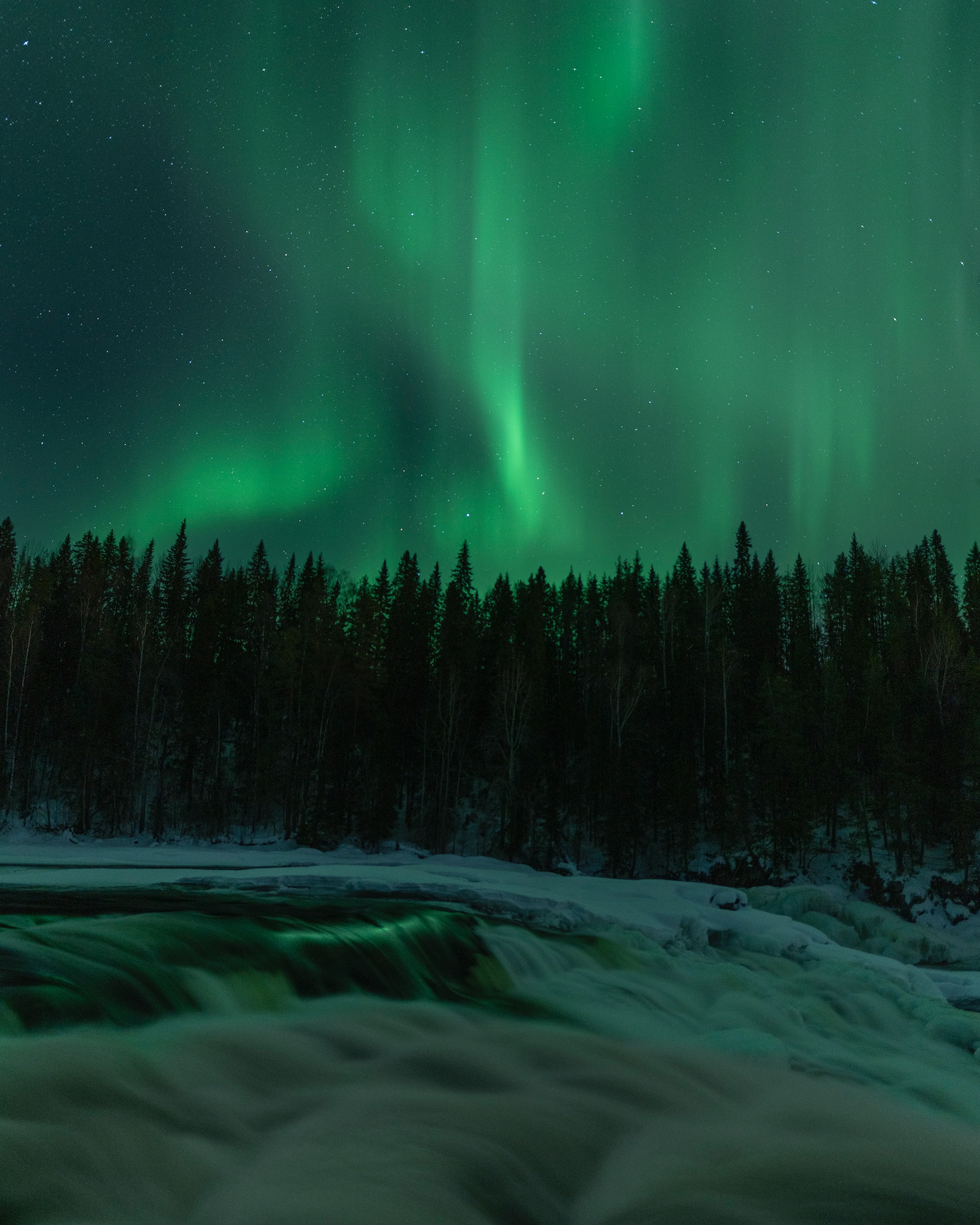}
		\includegraphics[height=34mm]{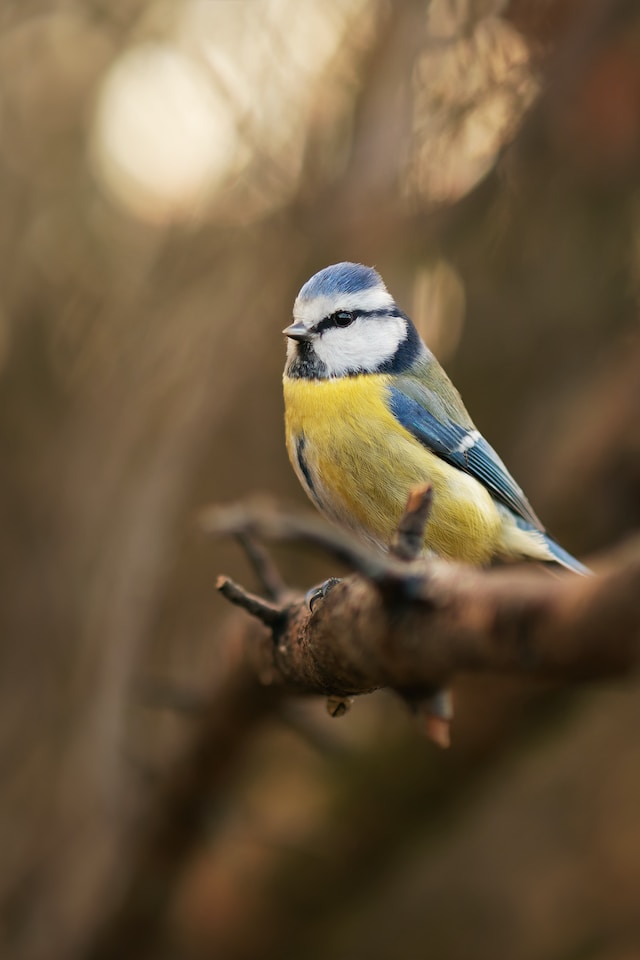}
		\includegraphics[height=34mm]{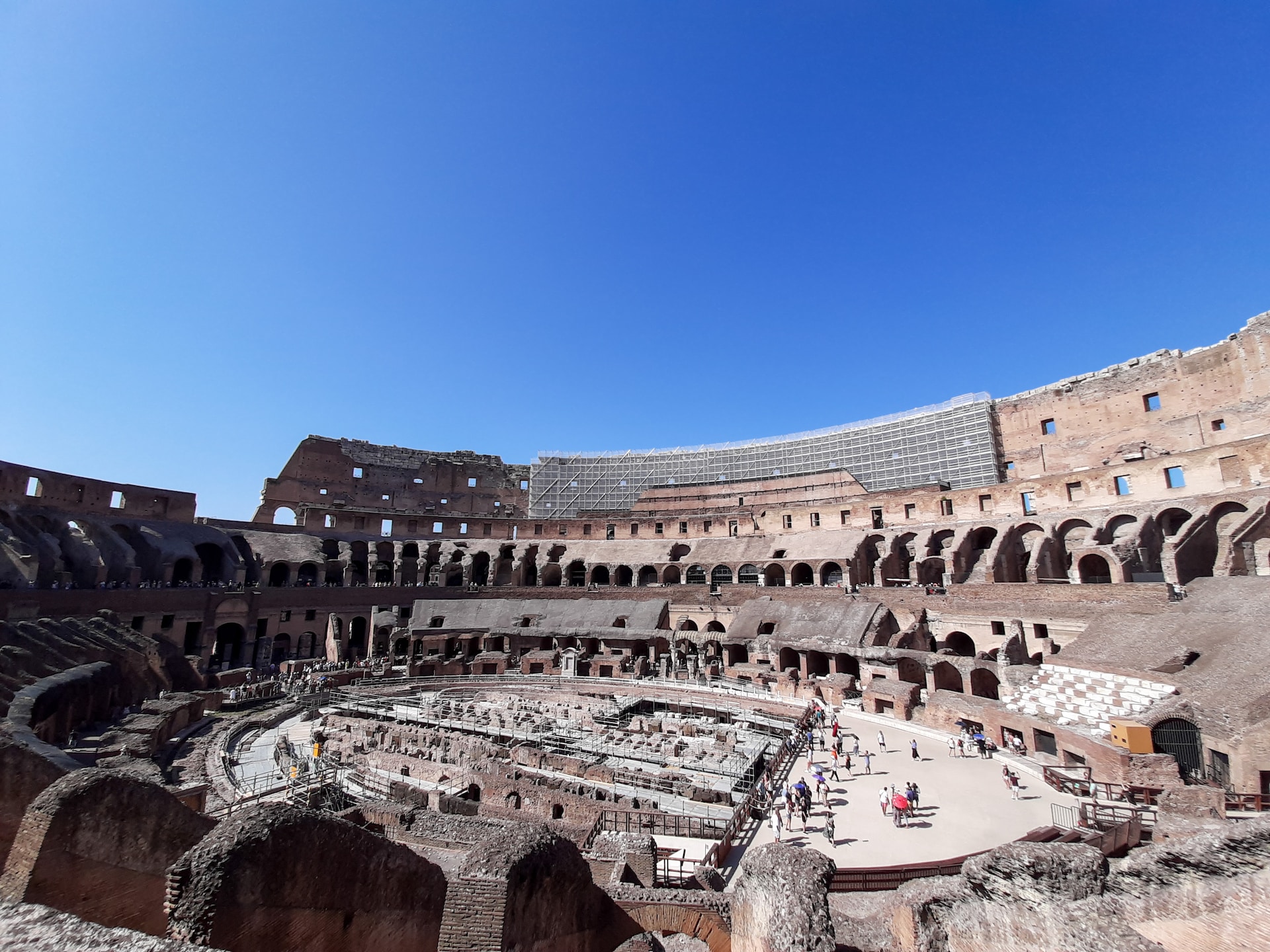}\hfill
		\caption{From left to right, top to bottom: Test images from unsplash.com (artist and size): (1) Kadyn Pierce (4000x6000), (2) Willian Justen de Vasconcellos (3024x4032), (3) Emre Raizyq (6240x4160), (4) Kevin Charit (5780x3853), (5) Vishwajeet Nishad (2560x1920), (6) Christoph Nolte (2400x1920), (7) Lukasz Rawa (960x640), (8) Mark Stuckey (1440x1920) (left to right, top to bottom).}
		\label{fig:meine-grafik}
\end{figure}

\begin{figure}
        \centering
		\includegraphics[scale=0.8]{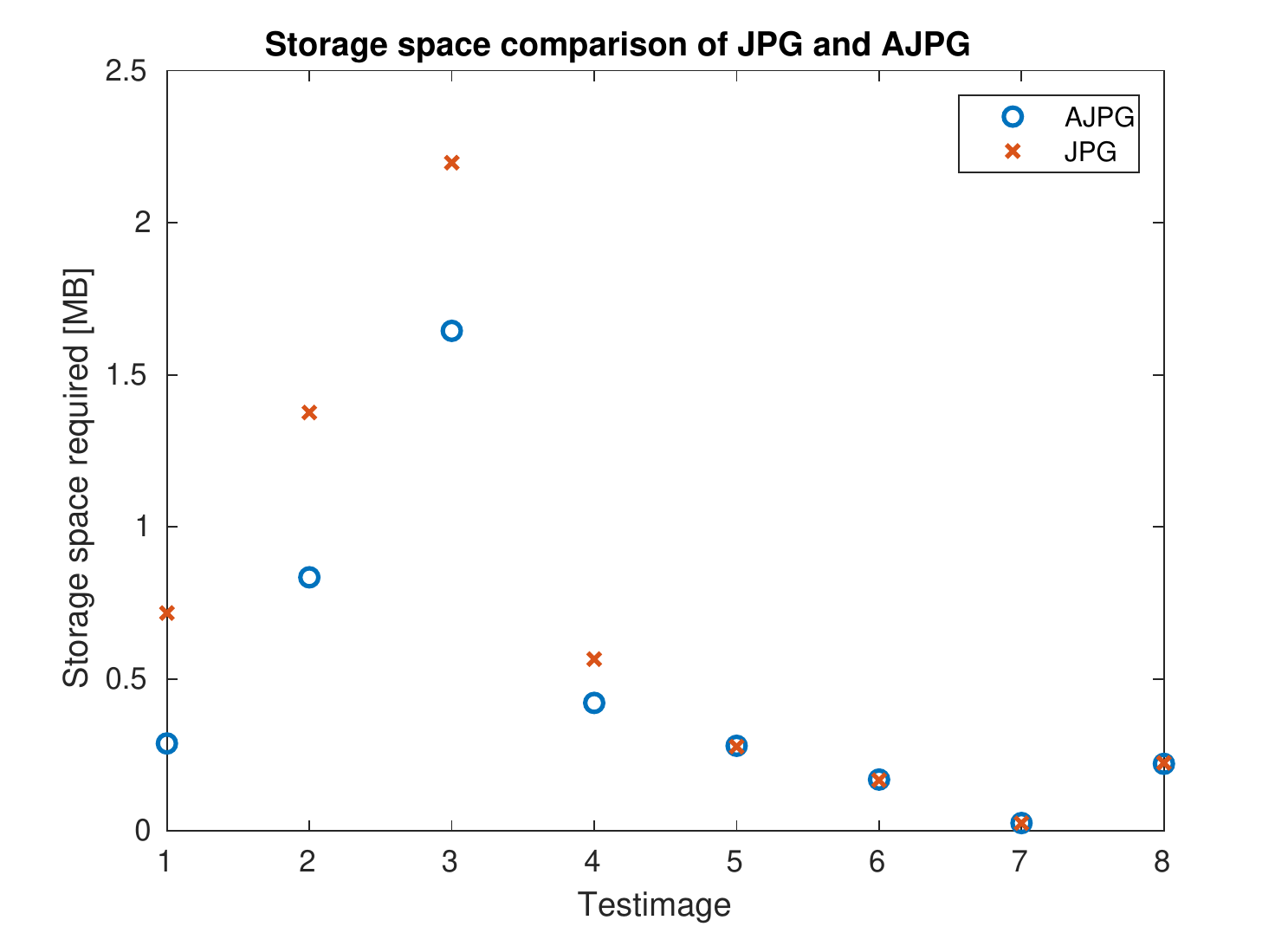}\\
        \includegraphics[height=14mm]{kadyn_pierce.jpg}
		\includegraphics[height=14mm]{willian_justen_de_vasconcellos.jpg}
		\includegraphics[height=14mm]{emre_raizyq.jpg}
  \hspace{2mm}
		\includegraphics[height=14mm]{kevin_charit.jpg}		
  \hspace{2mm}
		\includegraphics[height=14mm]{vishwajeet_nishad.jpg}
  \hspace{2mm}
		\includegraphics[height=14mm]{christoph_nolte.jpg}
  \hspace{1.5mm}
		\includegraphics[height=14mm]{lukasz_rawa.jpg}
  \hspace{1mm}
		\includegraphics[height=14mm]{mark_stuckey.jpg}
  \caption{Storage space required for the original JPEG files (JPG) and their adaptive recompression using Algorithm~\ref{alg:adaptive} (AJPG).}\label{fig:storage}
\end{figure}
 
\begin{figure}\centering
	\includegraphics[width=0.49\textwidth]{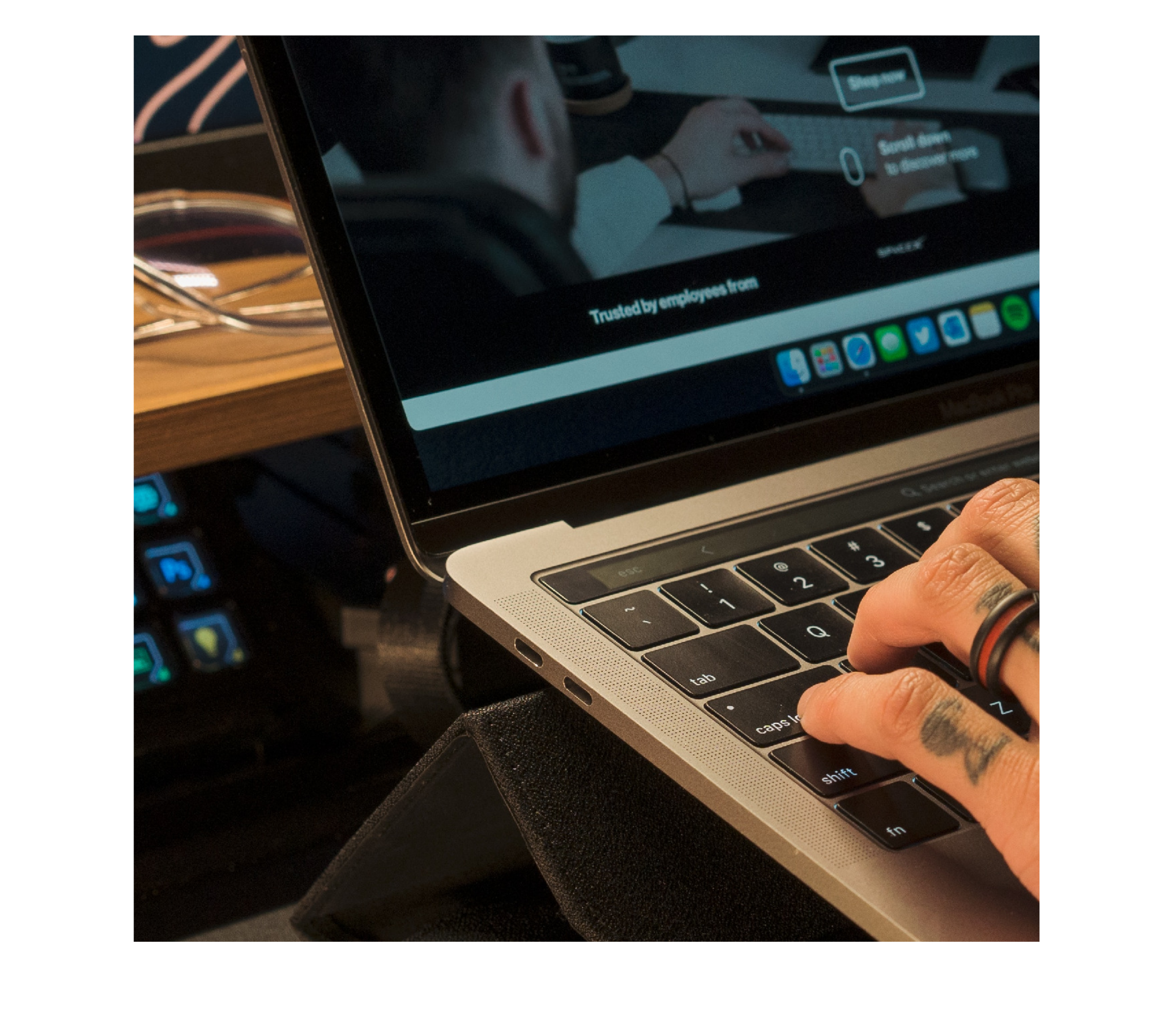} \\        
        \includegraphics[width=0.49\textwidth]{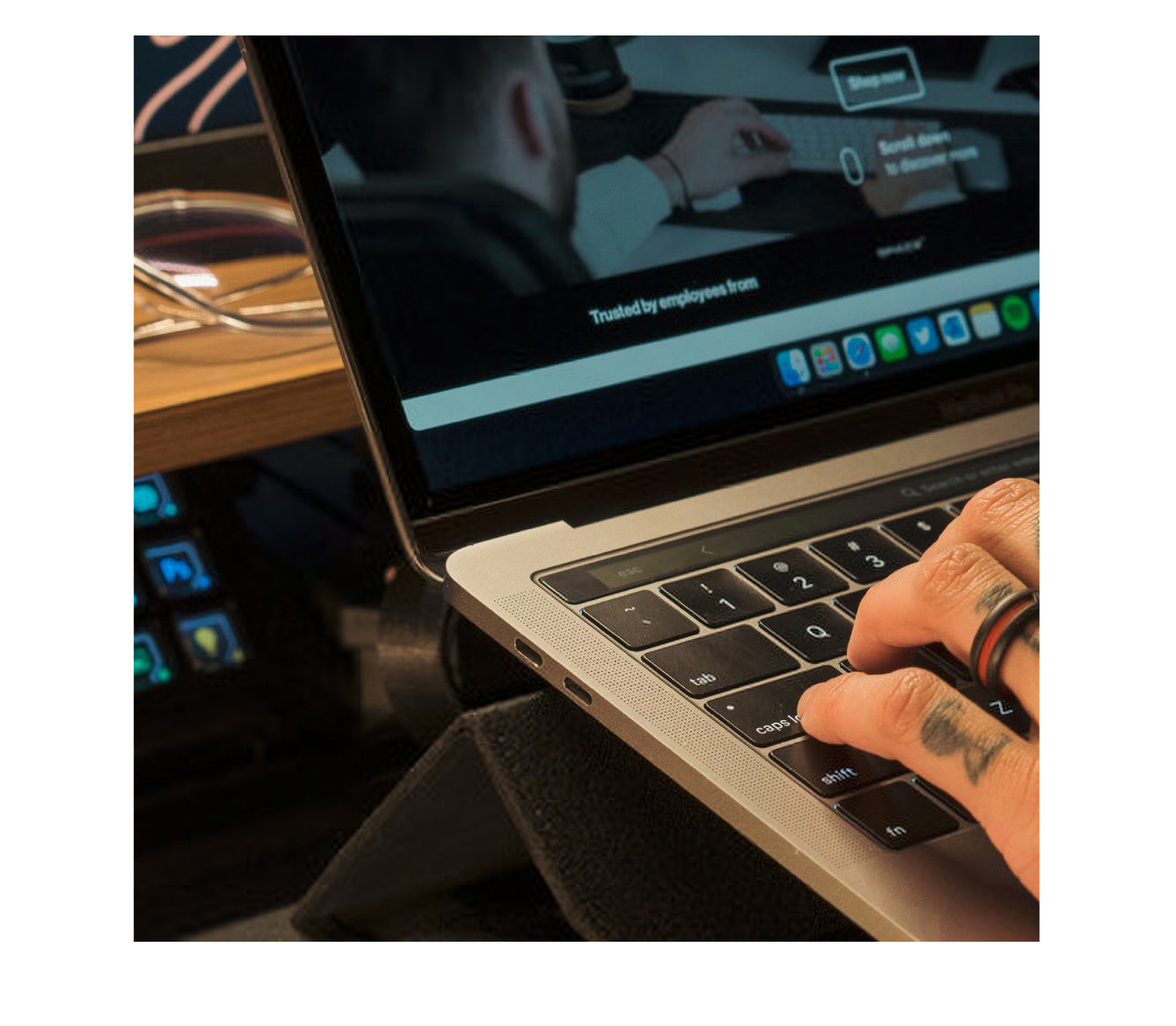}
        \hspace{-10mm}
        \includegraphics[width=0.49\textwidth]{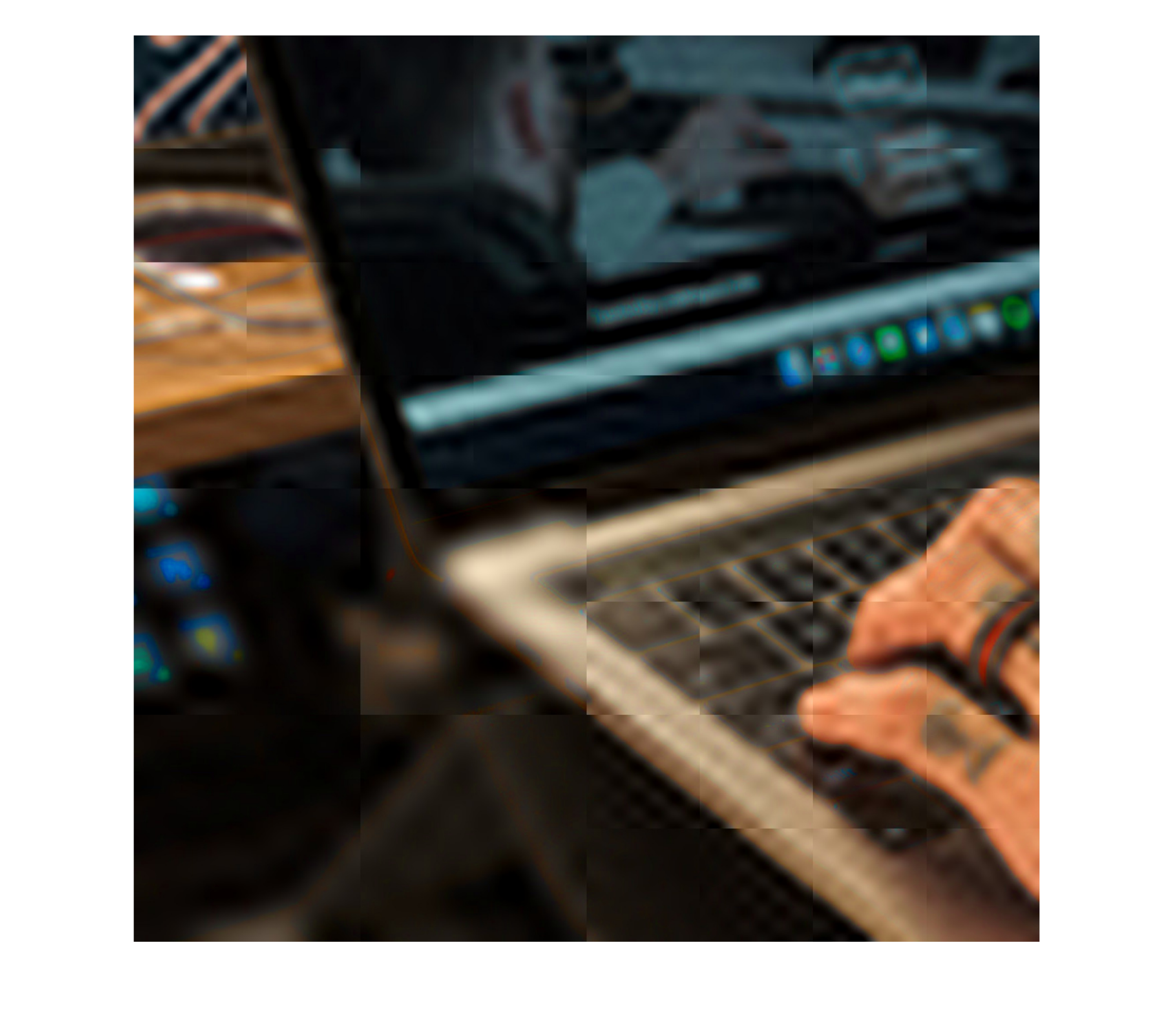}
	\includegraphics[width=0.49\textwidth]{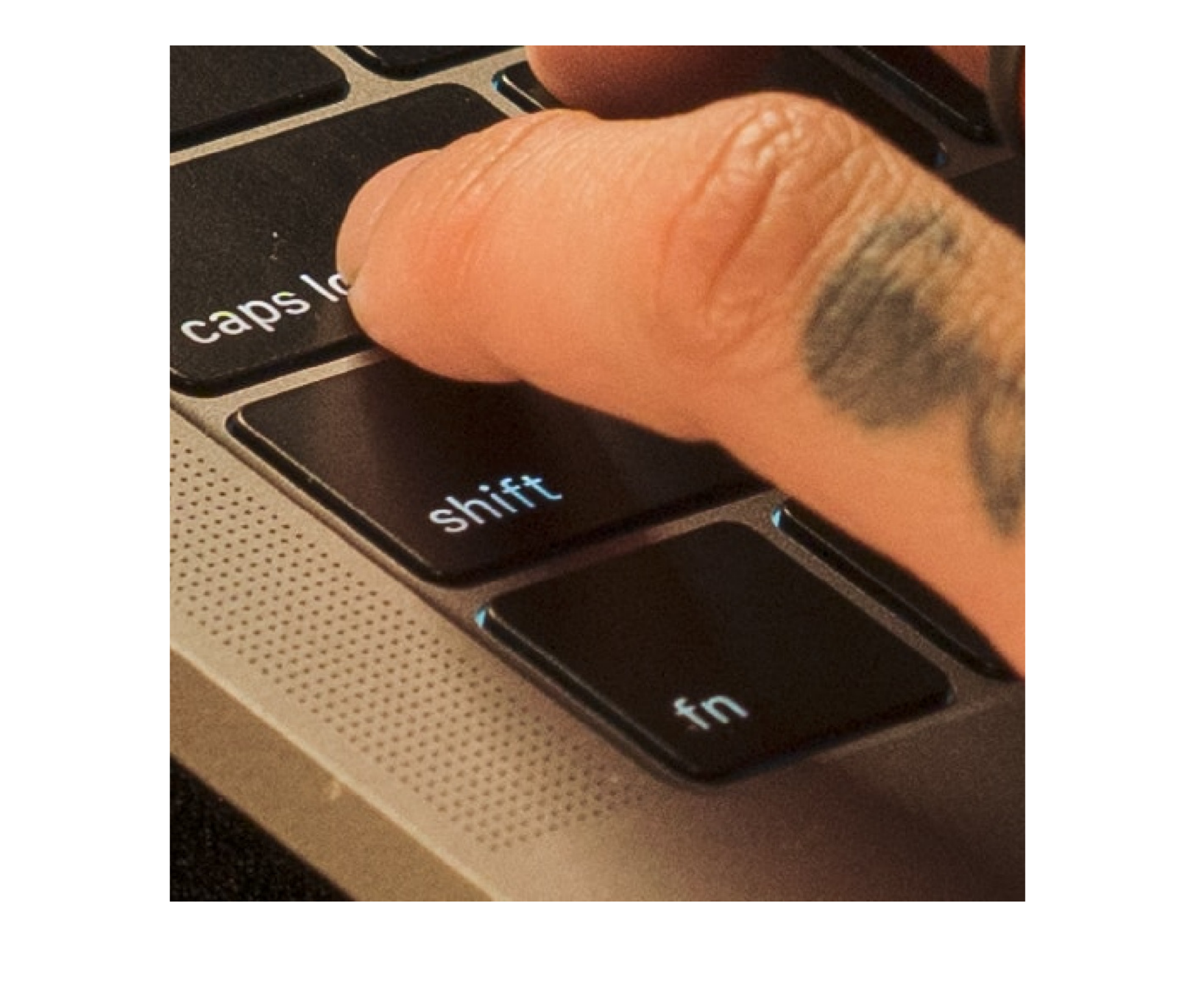}		
        \hspace{-12mm}
        \includegraphics[width=0.49\textwidth]{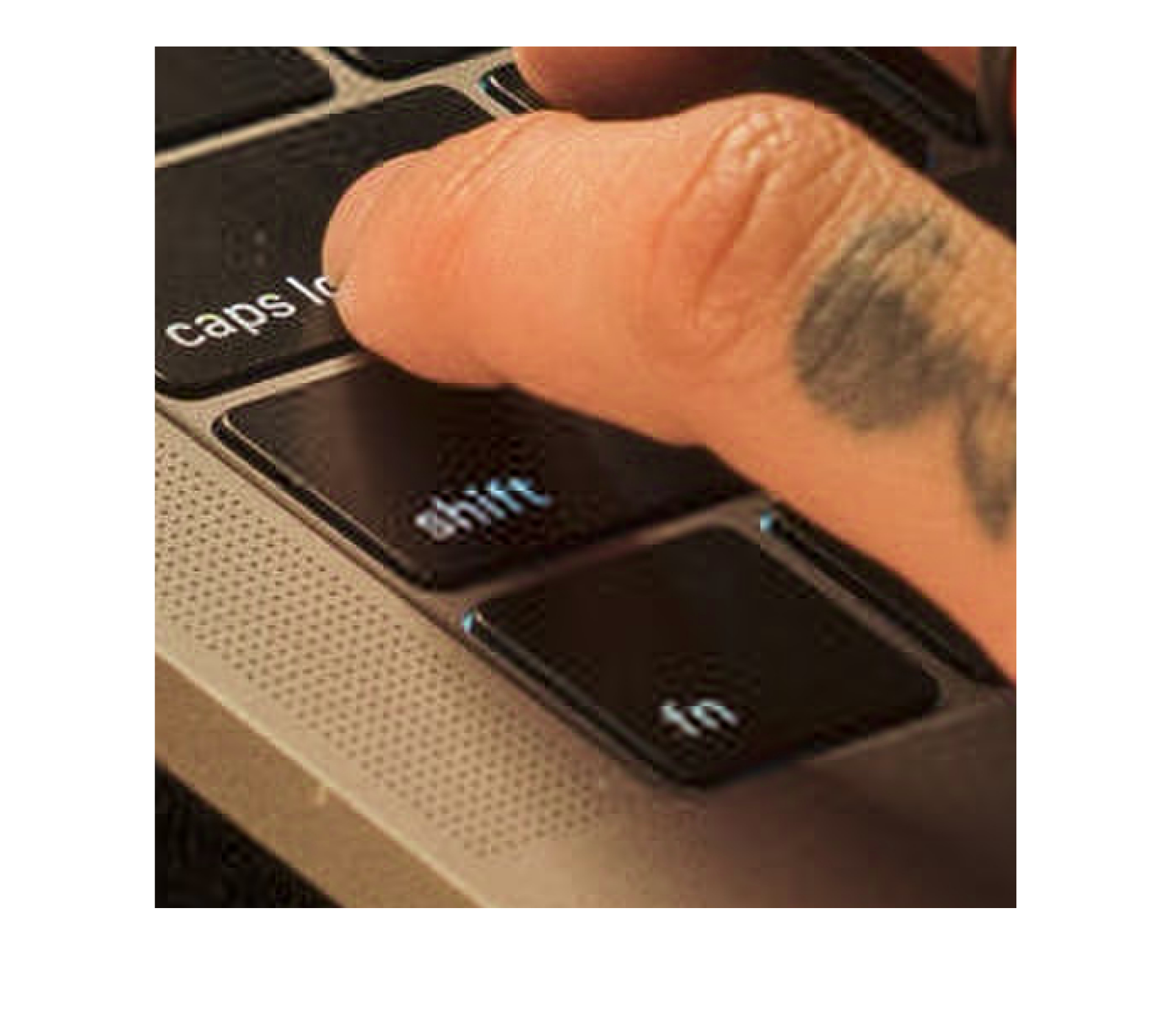}	
		\caption{Comparison of the original image (top) with the two output of the adaptive algorithm (middle) for small and large $\tau$. Closeup of original (bottom-left) and recompression (bottom-right) used in Figure~\ref{fig:storage}. Image from unsplash.com by artist Kadyn Pierce.}\label{fig:qual}
	\end{figure}
 \begin{figure}\centering
		\includegraphics[width=0.49\textwidth]{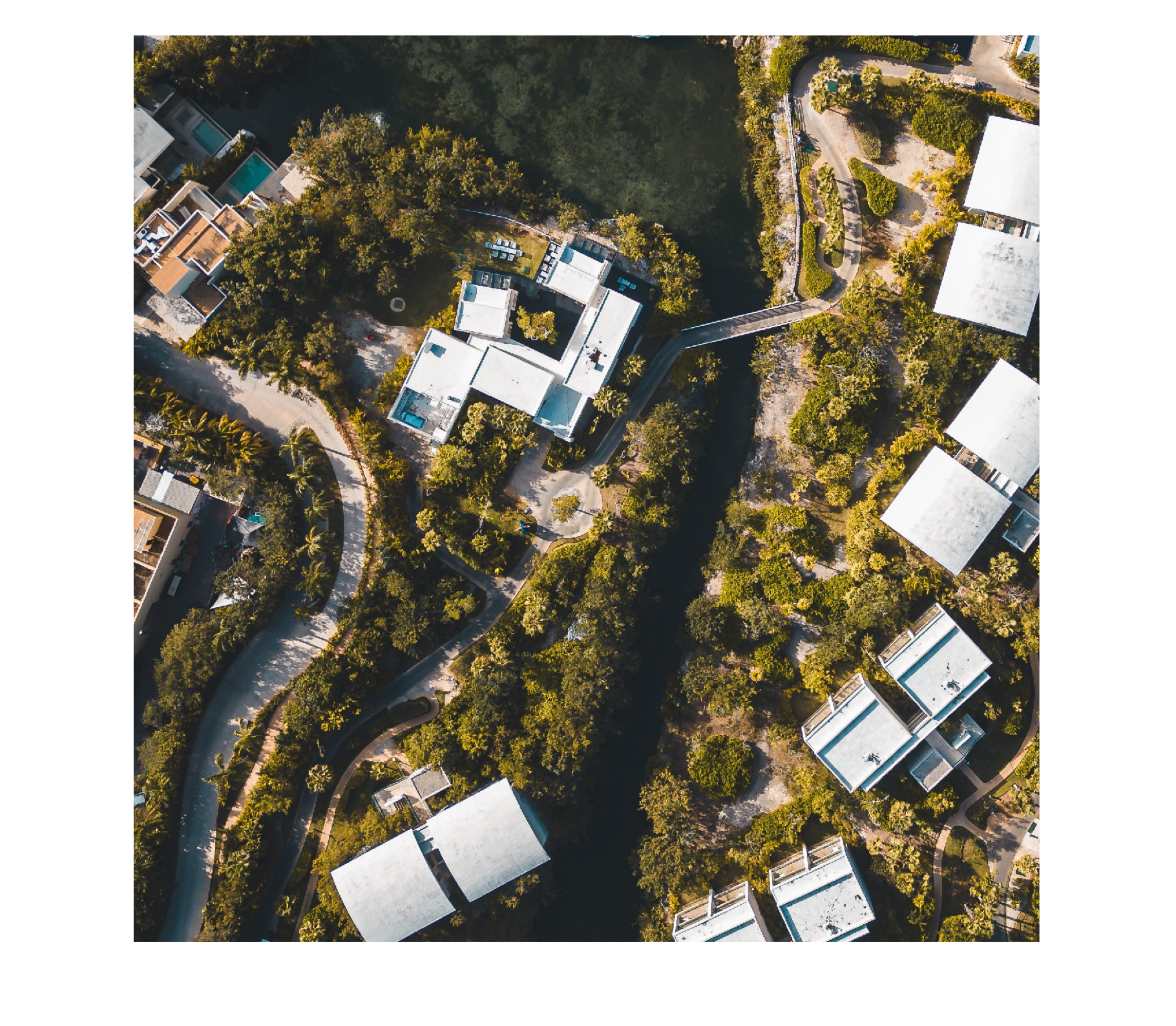} \\
        \includegraphics[width=0.49\textwidth]{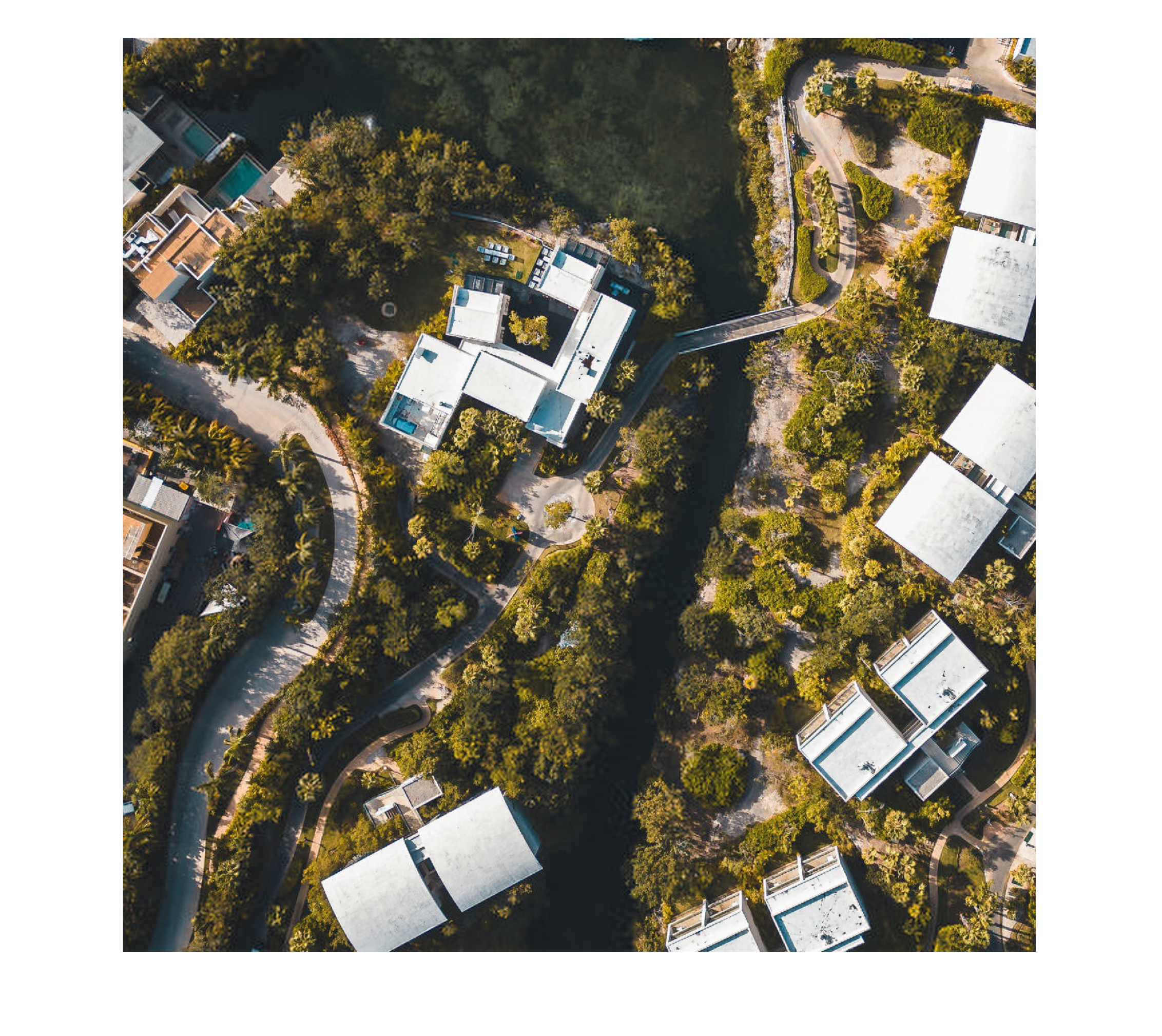}
        \hspace{-10mm}
        \includegraphics[width=0.49\textwidth]{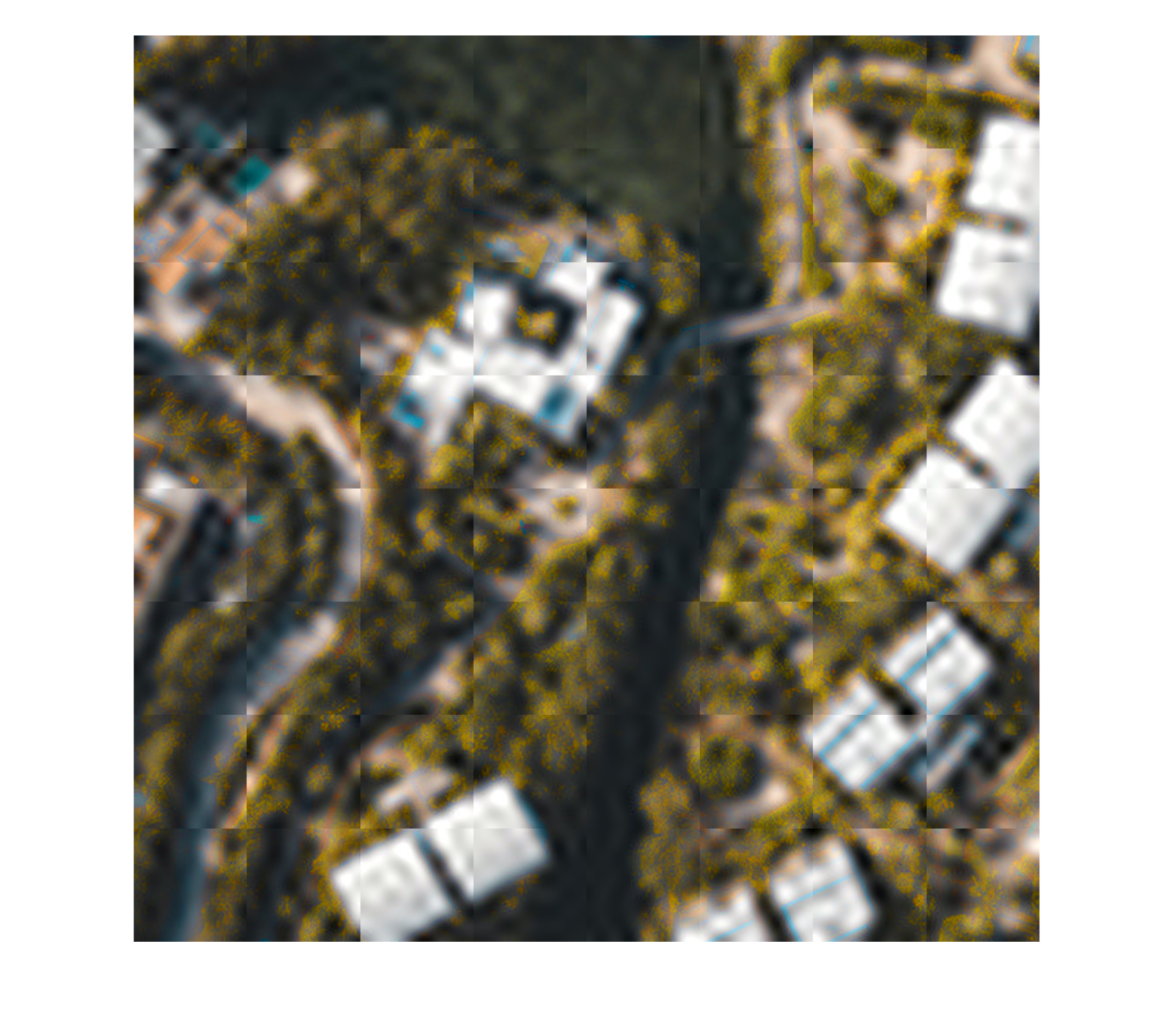}
		\includegraphics[width=0.49\textwidth]{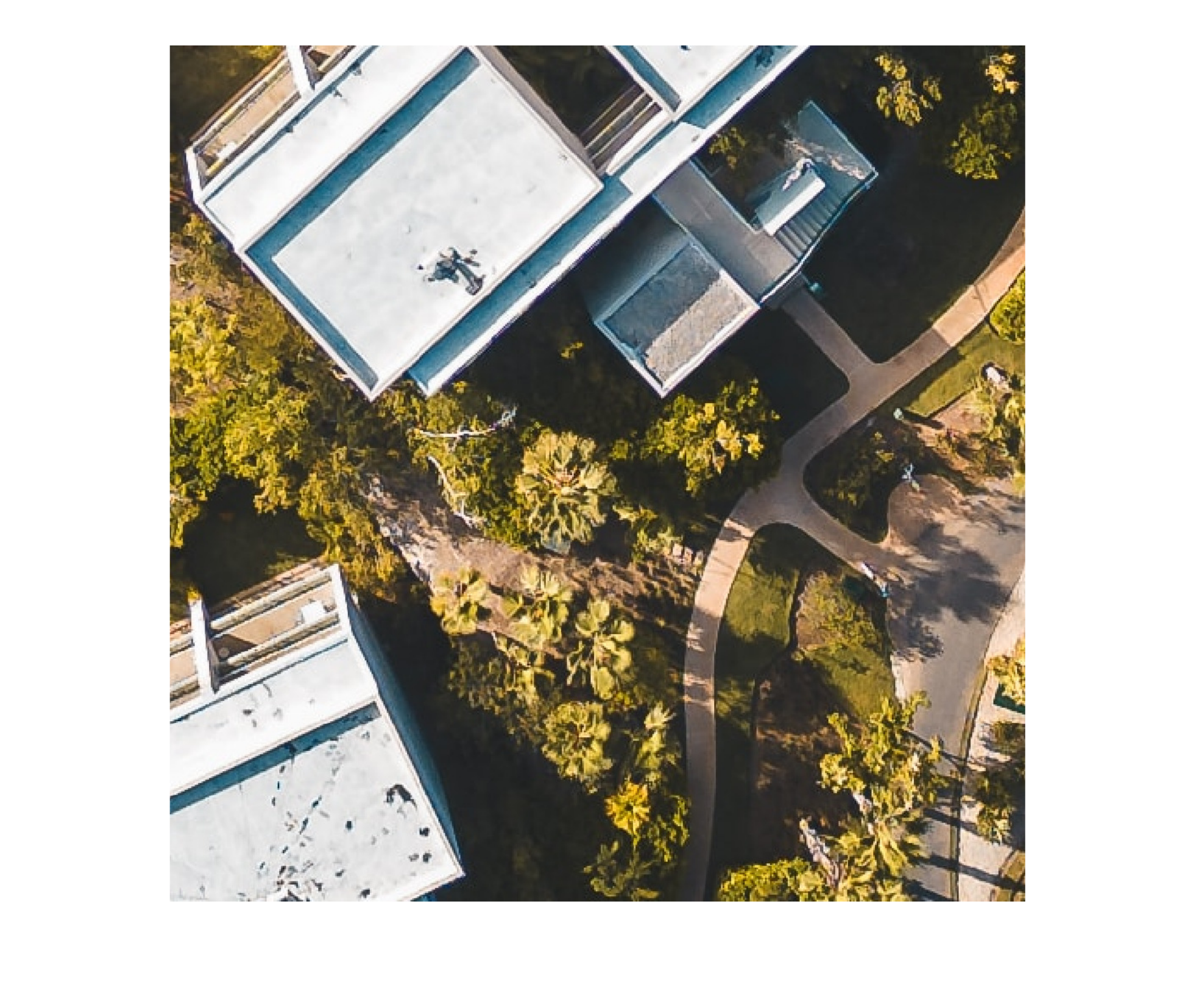}		
        \hspace{-12mm}
        \includegraphics[width=0.49\textwidth]{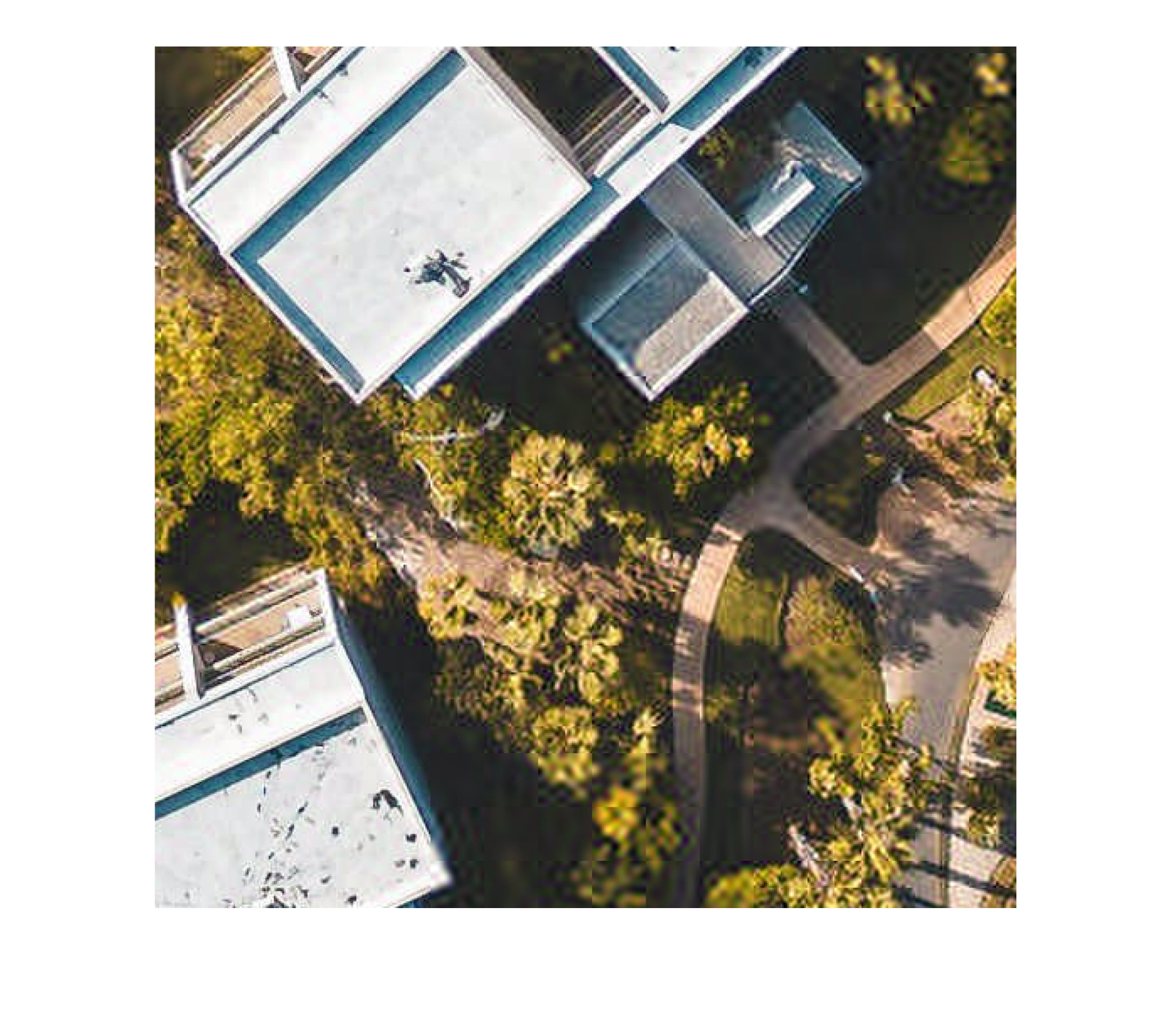}	
		\caption{Comparison of the original image (top) with the two output of the adaptive algorithm (middle) for small and large $\tau$. Closeup of original (bottom-left) and recompression (bottom-right) used in Figure~\ref{fig:storage}. Image from unsplash.com by artist Willian Justen de Vasconcellos.}\label{fig:qual2}
	\end{figure}
 \begin{figure}\centering
	\includegraphics[width=0.49\textwidth]{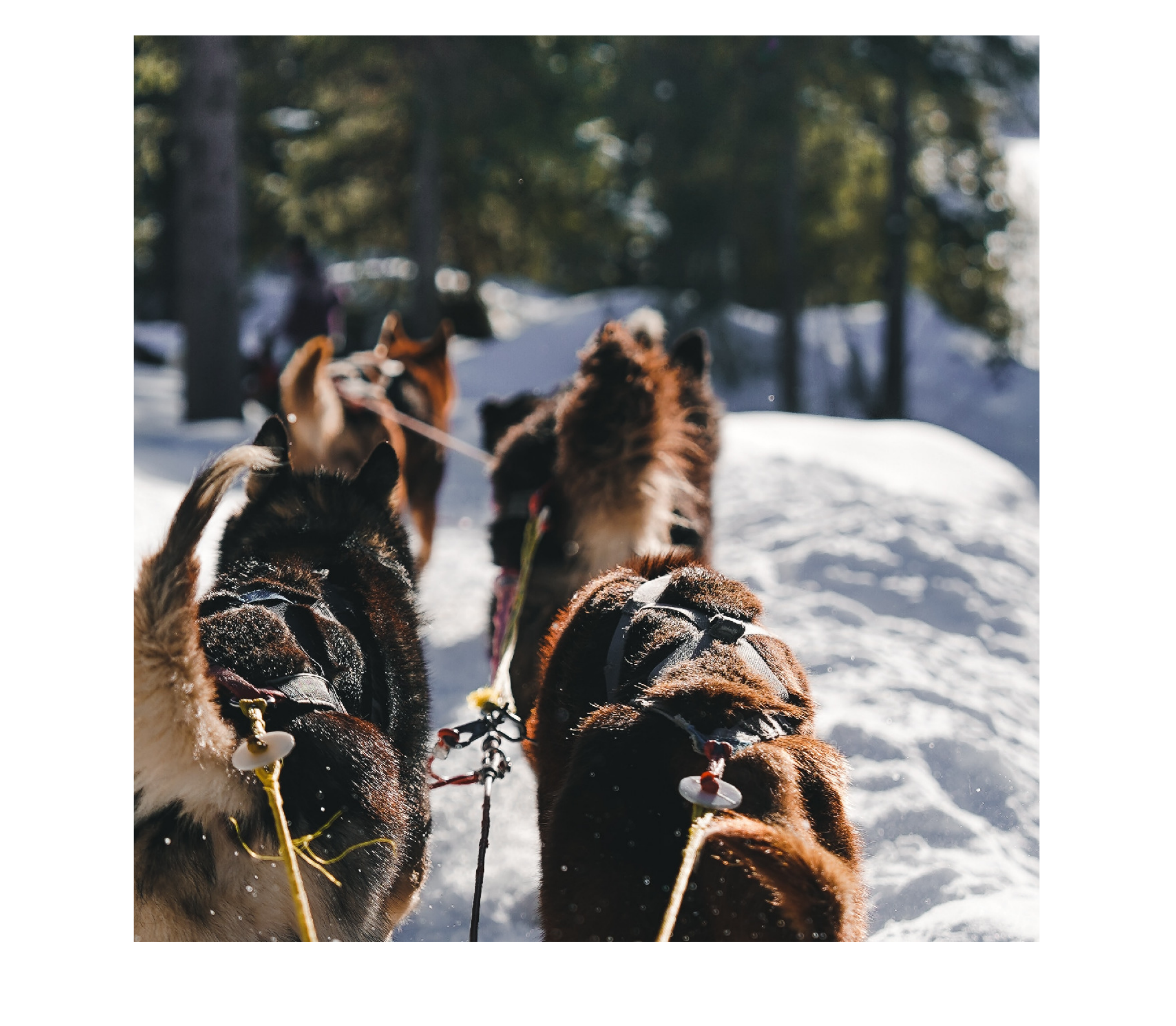} \\
        \includegraphics[width=0.49\textwidth]{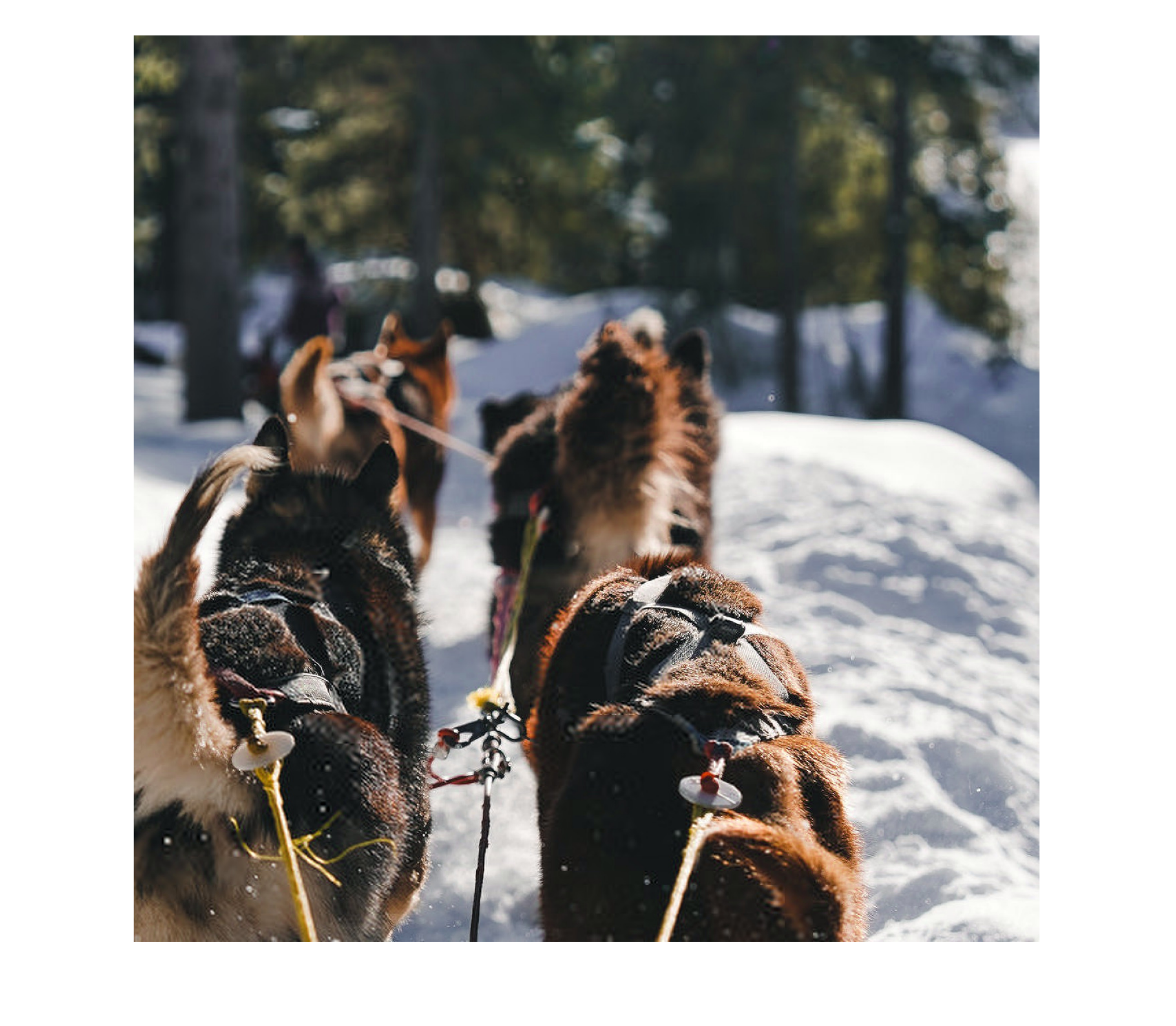}
        \hspace{-10mm}
        \includegraphics[width=0.49\textwidth]{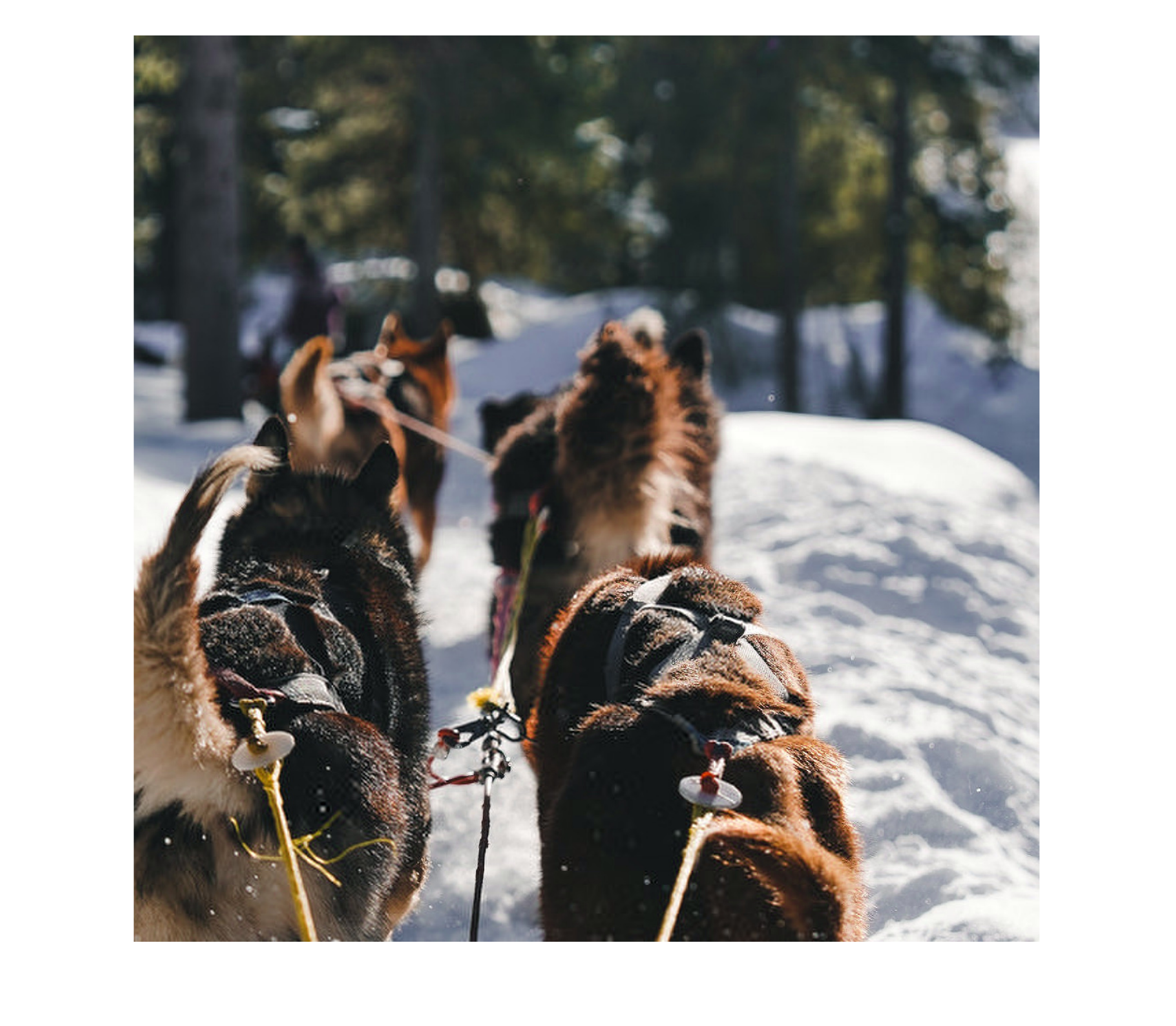}		
        \includegraphics[width=0.49\textwidth]{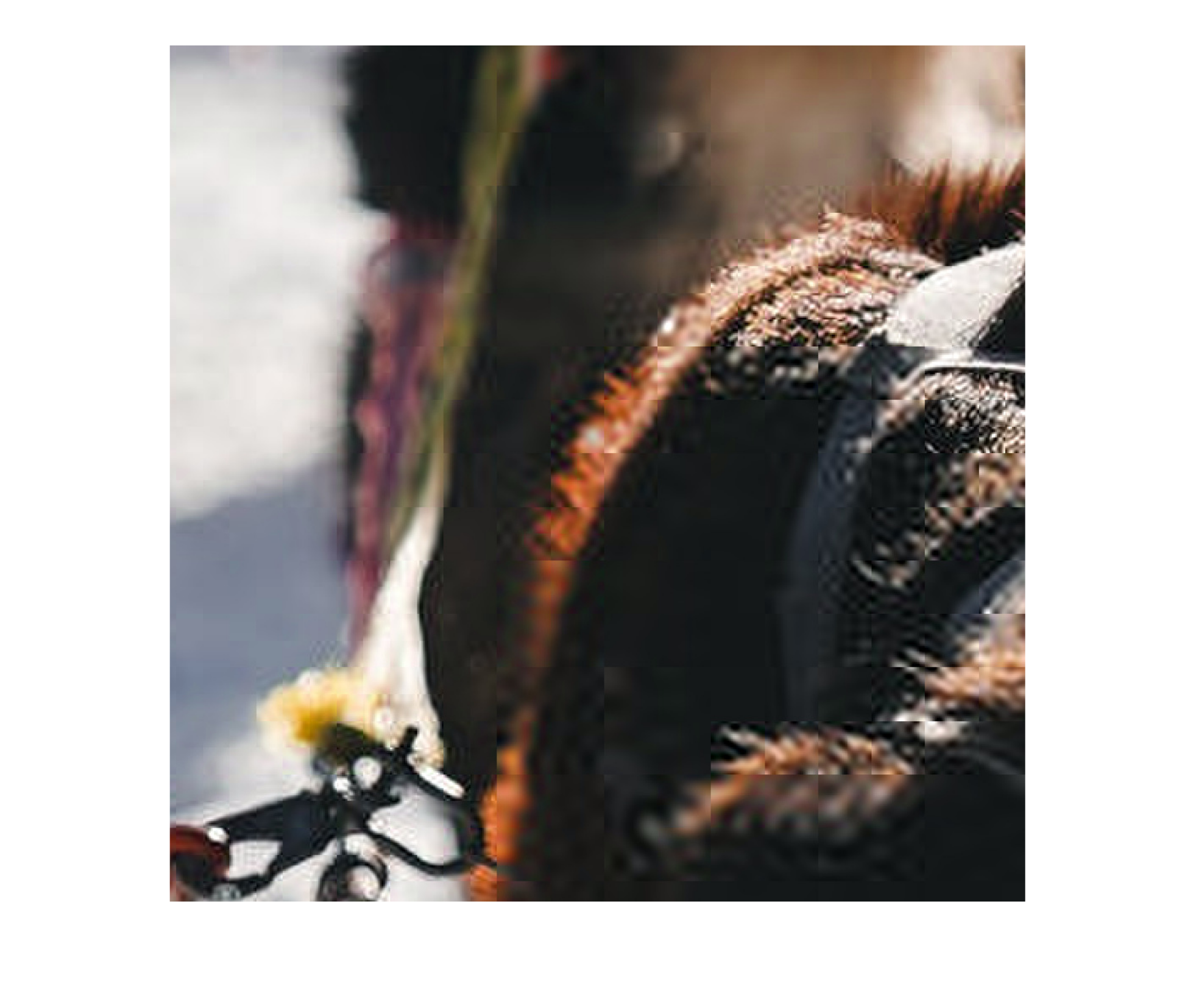}		
        \hspace{-12mm}
        \includegraphics[width=0.49\textwidth]{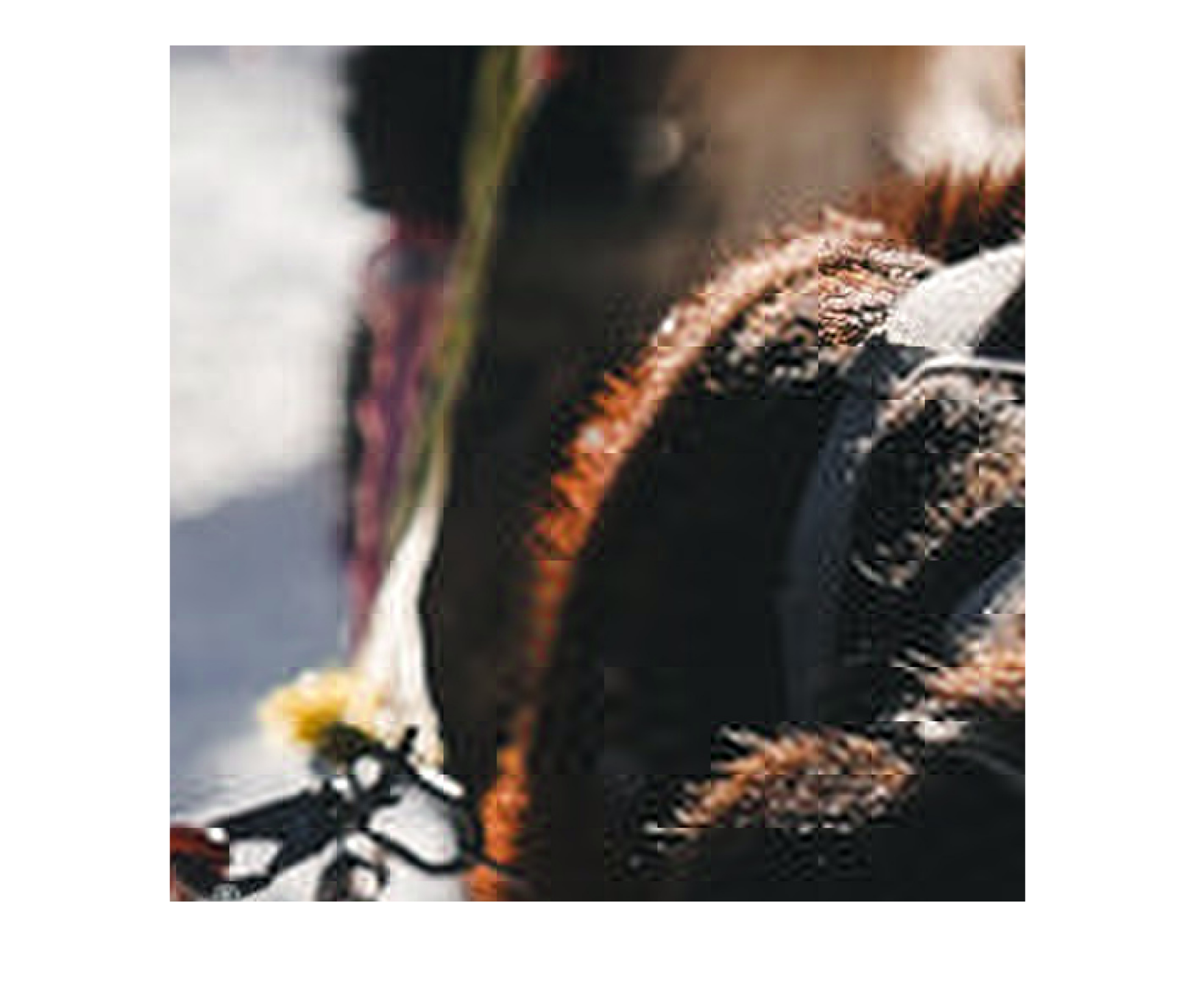}	
		\caption{Comparison of the original image (top) with the output of the adaptive algorithm with respect to the $L^2$ norm (middle-left) and the BV norm (middle-right). On the bottom row a detail of the $L^2$ output (left) and the BV output (right) is compared, which suggests that the BV norm may be better suited. The respective threshold parameters are chosen such that both outputs would take up almost the same storage space. Image from unsplash.com by artist Kevin Charit.}\label{fig:bvl2}
	\end{figure}

\section{Conclusion}
We demonstrated the feasibility of adaptive JPEG compression both in practical experiments and in mathematical optimality results. From an efficiency standpoint, it would make sense to start with a finer uniform grid $\TT_0$. Both Algorithm~\ref{alg:adaptive} and Algorithm~\ref{alg:reconstruct} parallelize in a straightforward fashion over the elements of the initial grid $R\in\TT_0$. 

The method can be extended in several other ways: A first idea is to apply the algorithm to videos instead of images. This would require us to run Algorithm~\ref{alg:adaptive} on a mesh consisting of three dimensional cuboid elements $R$ (each cuboid can be split into eight children) and to approximate $I|_R$ by a discrete 3D cosine transform (or a variant which treats the time coordinate separately).

Another extension of the method applies Binev's $hp$-approximation algorithm~\cite{binev:2018}. Instead of increasing the polynomial degree (as is done in the finite element method), $p$ refinement can modify the quantization matrix locally on each element $R$, or it can alter the number of frequencies that are stored for each $R$.

Finally, it would be interesting to replace the norm $\norm{\cdot}{}$ in the local errors by some more sophisticated method which judges subjective quality loss of images. 
\begin{funding}
  Funded by the Deutsche Forschungsgemeinschaft (DFG, German Research Foundation) -- Project-ID 258734477 -- SFB 1173 as well as the Austrian Science Fund (FWF)
under the special research program Taming complexity in PDE systems (grant SFB F65).
\end{funding}
\bibliographystyle{alpha}
\bibliography{literature}
\end{document}